\newtheorem{theorem}{Theorem}[section]
\newtheorem{lemma}[theorem]{Lemma}
\newtheorem{proposition}[theorem]{Proposition}
\theoremstyle{definition}
\newtheorem{example}[theorem]{Example}
\newtheorem{corollary}[theorem]{Corollary}
\newtheorem{remark}[theorem]{Remark}
\numberwithin{equation}{section}
\begin{document}

\title[Weyl asymptotics for tensor products of operators]{Weyl asymptotics for tensor products of operators and Dirichlet divisors}
\author[T. Gramchev]{Todor Gramchev}
\address{Dipartimento di Matematica e Informatica, Universit\`a di Cagliari, Via Ospedale 72, 09124 Cagliari, Italy}
\email{todor@unica.it}
%    \thanks will become a 1st page footnote.
%\thanks{Partially supported by a PRIN project of MIUR, Italy and GNAMPA, INDAM}

\author[S. Pilipovic]{Stevan Pilipovi\'{c}}
\address{Institute of Mathematics, University of Novi Sad, trg. D. Obradovica 4, 21000 Novi Sad, Serbia}
\email{stevan.pilipovic@uns.dmi.ac.rs}
%\thanks{Support by the project 144016, Serbia}
%
\author[L. Rodino]{Luigi Rodino}
\address{Dipartimento di Matematica, Universit\`a di Torino, Via Carlo Alberto 10, 10123 Torino, Italy}
\email{luigi.rodino@unito.it}
%\thanks{Partially supported by a PRIN project of MIUR, Italy and GNAMPA, INDAM}
\author[J. Vindas]{Jasson Vindas}
\address{Department of Mathematics, Ghent University, Krijgslaan 281 Gebouw S22, B-9000 Gent, Belgium}
\email{jvindas@cage.Ugent.be}
%\thanks{Supported by...}

\subjclass[2000]{Primary 35P20; Secondary 35P15}

%\date{}

\keywords{Weyl asymptotics; tensor producs of operators; multisingular operators; spectral theory; Dirichlet divisors}

\begin{abstract}
We study the counting function of the
eigenvalues for tensor products of operators,
and their perturbations,
in the context of
Shubin classes and closed manifolds. We emphasize connections
with problems of analytic number theory, concerning in particular generalized Dirichlet divisor functions. 
\end{abstract}

\maketitle

\section{Introduction}

As well known, there are deep connections between spectral theory and analytic number theory. One main topic is given by Weyl formula for self-adjoint partial differential operators  or pseudo-differential operators. Namely, the leading term in the expansion of the counting function $N(\lambda)$ of the the eigenvalues $\leq \lambda$  is recognized to be proportional to the volume of the region defined by the $\lambda$-level surfaces of the symbol, and in turn to the number of the lattice points belonging to the region. Even, for relevant classes of operators, each point of the lattice corresponds exactly to one of eigenvalues, counted according to the multiplicity, and the computation of $N(\lambda)$ leads in a natural way to problems of number theory. Let us refer for example to \cite{BrPer,bruning}, \cite{gotze}--\cite{kaplitski}, \cite{rodino}--\cite{shubin}. In this order of ideas, the attention will be fixed here on operators of the form of tensor products
\begin{equation}\label{prod}
P=P_1\otimes...\otimes P_p
\end{equation}
where the operators $P_j$, $j=1,\ldots, p$, are self-adjoint, say strictly positive (pseudo-differential) operators on corresponding Hilbert spaces with eigenvalues $\{\lambda^{(j)}_k\}_{k=1}^\infty, j=1,...,p.$ Then, the eigenvalues of $P$ are products of the form $\lambda_{k_1}^{(1)}...\lambda_{k_p}^{(p)}$ and the eigenfunctions are tensor products of the corresponding eigenfunctions, cf. \cite{BrPer,shechter} for the general functional analytic setting. Hence, 
\begin{equation}\label{prod2}
N_P(\lambda)= \#\left\{(k_1,\dots,k_p)\in\mathbb{N}^{p}:\ \lambda^{(1)}_{k_1}\lambda^{(2)}_{k_2}\dots\lambda^{(p)}_{k_{p}}\leq\lambda\right\}.
\end{equation}
The computation of $N_p(\lambda)$ meets then some classical divisor counting problems. To give a simple example, consider the Hermite operators 
\begin{equation}\label{prod3}
H_j=\frac{1}{2}(-\partial^2_{x_j}+x_j^2)+\frac{1}{2}, \ \ \ j=1,2.
\end{equation}
Writing for short $H_1$ and $H_2$
for $H_1\otimes I_2$ and $I_1\otimes H_2,$ we define the tensorized Hermite operator $H=H_1\otimes H_2.$ In applications, $H$ is sometimes used as a substitute for the standard two dimensional Hermite operator
$H_1+H_2,$ producing the same eigenfunctions, i.e., two dimensional Hermite functions. The distribution of eigenfunctions, counted with multiplicity, is however quite different, being related to the distribution of the prime numbers.  In fact, the eigenvalues of the one-dimensional  Hermite operator, 
normalized as above, are the positive integers; therefore, (\ref{prod2}) reads in this case as
\begin{equation}\label{prod4}
N_H(\lambda)=D(\lambda)=\sum_{n=1}^{[\lambda]} d(n), \ \ \ \lambda \geq 1,
\end{equation}
where $d(n)$ denotes the number of divisors of $n$  and $[\lambda]$ stands for the integral part of $\lambda$. Dirichlet proved  in 1849 that
\begin{equation}\label{prod5}
D(\lambda)=\lambda\log \lambda
+(2\tilde{\gamma}-1)\lambda+E(\lambda),
\end{equation}
where $\tilde{\gamma}$ is the Euler-Mascheroni constant and $E(\lambda)=O(\lambda^{1/2}).$
The first term on the right hand side of (\ref{prod5}) can be easily recognized as the volume of the hyperbolic region defined by the symbol of $H$, whereas
the optimal growth order of the rest $E(\lambda)$ is a long-standing open problem in the analytic theory of numbers, see for example \cite{apostol,hardy,huxley,ivic,SteSha}.

Natural generalizations of the Hermite operators $H_j$ in (\ref{prod3}) are the global pseudo-differen\-tial operators of M. Shubin \cite{helfer,BBR,NR,shubin}. If $P_j$ is globally elliptic self-adjoint in these classes, then the Weyl formula yields
\begin{equation}\label{prod6}
N_{P_j}(\lambda)\sim A_j\lambda^{\alpha_j},
\end{equation}
where $\alpha_j=2n_j/m_j$, with $m_j$ the order of $P_j$ and 
$n_j$ the space dimension. The constant $A_j$
depends on the symbol
 of $P_j$, according to the Weyl formula. Note that the tensorized product in (\ref{prod}) is not any longer globally elliptic on $\mathbb R^n, n=n_1+...+n_p.$ 

The first aim of the present paper will be to deduce from (\ref{prod6}) an asymptotic expansion for the spectral counting function $N_P(\lambda)$ in (\ref{prod2}). 
%We shall also study  invariance 
% of the result under lower order perturbations. 
Particular attention will be devoted to lower order terms of the asymptotic expansion for some particular cases. As an example, define $H_j$ as in (\ref{prod3}) and consider now
 \begin{equation}\label{prod7}
 H^{\vec\beta}=H_1^{\beta_1}H_2^{\beta_2},
 \end{equation}
 where $\vec{\beta}=(\beta_1,\beta_2)$ is a couple of positive integers with $\beta_2\neq\beta_1.$ Then, we shall prove that
 \begin{equation}\label{prod8}
N_{H^{\vec\beta}}(\lambda)=\zeta(\beta_2/\beta_1)\lambda^{1/\beta_1}+\zeta(\beta_1/\beta_2)\lambda^{1/\beta_2}+O(\lambda^{1/(\beta_1+\beta_2)}),
 \end{equation}
where $\zeta(z)$ is the Riemann zeta function, analytically continued in the complex plane for $z\neq 1.$
 
 As we shall also detail in the paper, parallel results can be obtained when $P_j$ in (\ref{prod}) are elliptic self-adjoint pseudo-differential operators on a smooth compact manifold. In this case (\ref{prod6}) is valid with $\alpha_j=n_j/m_j$, see \cite{hormander}.
 
 Let us finally describe what, to the best of our knowledge, was already known about tensor products of pseudo-differential operators and their spectrum,
 as well as what is new in our paper.  
An algebra of ``bisingular" pseudo-differential operators on the product of two manifolds $M_1\times M_2,$  containing $P_1\otimes P_2$ with $P_1$ and $P_2$ beign classical pseudo-differential operators on $M_1, M_2$, respectively, was studied by Rodino \cite{rodino}
in connection with the multiplicative property of the Atiyah-Singer index \cite{AS}. The spectral properties of this class were recently studied by Battisti \cite{batisti}. The variant for the Shubin type operators has been considered in \cite{BaGrPiRo}. These results give a general framework for the study of the example (\ref{prod4}) with the expansion (\ref{prod5}), and provide as well the leading term in the expansion (\ref{prod8}) for the example (\ref{prod7}). Let us also mention the articles \cite{gprw1,gprw2}, where starting from the twisted Laplacian of M. W. Wong \cite{wong}, similar problems of Dirichlet divisor-type were met. The operators in \cite{gprw1,gprw2,wong} are not tensor products, but they can be reduced to the form (\ref{prod}) by conjugating with a Fourier integral operator, cf. \cite{GPR}.

From the point of view of Mathematical Physics, Kaplitski{\u\i} \cite{kaplitski} has independently studied  the spectral properties of operators on the torus $\mathbb T^2$ with principal part
$$P=P_x\otimes P_y=\partial^2_{x,y},$$
obtaining for the counting function  estimates  
of type (\ref{prod5}). Reference therein is made to Arnold \cite{arnold}, suggesting to transfer the Weyl formula to hyperbolic equations. The results 
in \cite{kaplitski} can be essentially regarded as a particular case of those from \cite{batisti}.
Expansions of the type (\ref{prod5}) appear also in the recent paper of Coriasco and Maniccia \cite{CorMan} concerning the spectrum of the so-called SG-operators. 

Summing up, the results mentioned above cover the case of products of two operators,  $P=P_1\otimes P_2$, except for the computation of lower order terms in the expansions, cf. (\ref{prod8}). Thus, our attention will be mainly focus on the case $p\geq 3$ of (\ref{prod}) and lower order terms.

In the present paper, the attention will be rather addressed to results of (elementary) analytic number theory, which we shall present in Section \ref{counting functions} in detail; they are new by themselves, we believe. The applications to spectral theory will be given in the conclusive Section \ref{operators}. 
 We shall not construct here an algebra of (multisingular) pseudo-differential operators containing $P_1\otimes ...\otimes P_p$ for $p\geq 3$. Computations are cumbersome, involving a stratified calculus of the type of that from \cite{MR,nikolarodino2,schulze,schulze2}, occurring in other contexts. Finally, we shall limit ourselves instead to consider  perturbations of the type $P+Q$, where $Q$ is a lower order pseudo-differential operator.
\section{Asymptotics of some counting functions}\label{counting functions}
We study in this preparatory section the asymptotic behavior of some counting functions of ``multi-divisor'' type. They will be very helpful when applied to spectral asymptotics of various examples of ``multi-singular'' operators. 

\subsection{Counting functions of products of sequences.}\label{countinggeneral}
We start by considering the following general question. Let $\{\lambda^{(j)}_{k}\}_{k=1}^{\infty}$, $j=1,\dots,p$, be non-decreasing sequences of positive real numbers. The sequences are rather arbitrary and they are not necessarily linked to any operator.

Assuming that we have some knowledge about each of the counting functions
\begin{equation}
\label{cgeq1}
N_{j}(\lambda):=\sum_{\lambda^{(j)}_{k}\leq\lambda}1=\# \left\{k\in\mathbb{N}:\ \lambda^{(j)}_{k}\leq\lambda\right\}, \ \ \ j=1,\dots,p,
\end{equation}
we would like to obtain asymptotic information about the counting function of the $p$ products of the elements of the sequences, namely, 
\begin{equation}
\label{cgeq2}
N(\lambda):=\sum_{\lambda^{(1)}_{k_1}\lambda^{(2)}_{k_2}\dots\lambda^{(p)}_{k_p}\leq\lambda}1=\#\left\{(k_1,\dots,k_p)\in\mathbb{N}^{p}:\ \lambda^{(1)}_{k_1}\lambda^{(2)}_{k_2}\dots\lambda^{(p)}_{k_{p}}\leq\lambda\right\}.
\end{equation}

The next simple proposition tells us that it is always possible to find the asymptotic behavior of (\ref{cgeq2}) whenever there is a block of counting functions (\ref{cgeq1}) with dominating asymptotic behavior.

\begin{proposition}
\label{cgp1} Suppose that there are non-negative numbers $\tau<\alpha$ and indices $j_{1},\dots,j_{\nu}$, where $1\leq\nu\leq p$, such that 
\begin{equation}
\label{cgeq3}
N_{j_{q}}(\lambda)\sim A_{j_{q}}\lambda^{\alpha}, \ \ \ \lambda\to\infty, \ \ \ q=1,\dots,\nu,
\end{equation}
with $A_{j_{q}}\neq0$, and
\begin{equation}
\label{cgeq4}
N_{j}(\lambda)=O(\lambda^{\tau}), \ \ \ \lambda\to\infty, \ \ \ j\notin\left\{j_{1},\dots,j_{\nu}\right\}.
\end{equation}
Then, the counting function $(\ref{cgeq2})$ has asymptotic behavior
\begin{equation}
\label{cgeq5}
N(\lambda)\sim A \lambda^{\alpha}\frac{(\alpha \log \lambda)^{\nu-1}}{(\nu-1)!} , \ \ \ \lambda\to\infty,
\end{equation}
where
\begin{equation}
\label{cgeq6}
A= \left(\prod_{q=1}^{\nu}A_{j_{q}}\right)\cdot \left(\prod_{j\notin\left\{j_{1},\dots,j_{\nu}\right\}}\left(\sum_{k=1}^{\infty}\frac{1}{\left(\lambda^{(j)}_{k}\right)^{\alpha}}\right)\right).
\end{equation}
\end{proposition}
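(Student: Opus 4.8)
The plan is to build up $N$ by peeling off one sequence at a time. After relabelling so that the dominant indices $j_{1},\dots,j_{\nu}$ come first (this is legitimate since the left-hand side of (\ref{cgeq2}) is symmetric in the sequences), write $N^{(r)}(\lambda)$ for the counting function of the products of the first $r$ sequences; conditioning on the last index gives the recursion $N^{(r)}(\lambda)=\sum_{k}N^{(r-1)}(\lambda/\lambda^{(r)}_{k})$, where only the finitely many $k$ with $\lambda/\lambda^{(r)}_{k}$ above the smallest relevant product contribute. Everything thus reduces to understanding the operation $M\mapsto\sum_{k}M(\lambda/\mu_{k})$ acting on a power-logarithmic asymptotics, and I would isolate two facts. \emph{(i)} If $M(\lambda)\sim B\lambda^{\alpha}(\alpha\log\lambda)^{m}/m!$ with $B\neq0$ and $N_{\mu}(\lambda)\sim A_{\mu}\lambda^{\alpha}$ with $A_{\mu}\neq 0$, then $\sum_{k}M(\lambda/\mu_{k})\sim BA_{\mu}\lambda^{\alpha}(\alpha\log\lambda)^{m+1}/(m+1)!$. \emph{(ii)} If $M(\lambda)\sim B\lambda^{\alpha}(\alpha\log\lambda)^{m}/m!$ and $N_{\mu}(\lambda)=O(\lambda^{\tau})$ with $\tau<\alpha$, then the series $\sum_{k}\mu_{k}^{-\alpha}$ converges and $\sum_{k}M(\lambda/\mu_{k})\sim B\big(\sum_{k}\mu_{k}^{-\alpha}\big)\lambda^{\alpha}(\alpha\log\lambda)^{m}/m!$. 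Granting these, one starts from $N_{j_{1}}(\lambda)\sim A_{j_{1}}\lambda^{\alpha}$ (the case $m=0$), applies \emph{(i)} successively for $j_{2},\dots,j_{\nu}$ to create the $\nu-1$ logarithmic powers, and then applies \emph{(ii)} once for each remaining index, picking up the factor $\sum_{k}(\lambda^{(j)}_{k})^{-\alpha}$ each time; this gives precisely (\ref{cgeq5})--(\ref{cgeq6}).

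To prove \emph{(i)} I would first discard the range $\mu_{k}>\lambda/y_{\varepsilon}$: there $M(\lambda/\mu_{k})\leq M(y_{\varepsilon})$ and at most $O(\lambda^{\alpha})$ terms are nonzero, so this part is $O(\lambda^{\alpha})$, negligible against $\lambda^{\alpha}(\log\lambda)^{m+1}$. On the range $\mu_{k}\leq\lambda/y_{\varepsilon}$ one has $\lambda/\mu_{k}\geq y_{\varepsilon}$, so $M(\lambda/\mu_{k})=(1+O(\varepsilon))B(\lambda/\mu_{k})^{\alpha}(\alpha\log(\lambda/\mu_{k}))^{m}/m!$ and the problem is reduced to the asymptotics of $\sum_{\mu_{k}\leq\lambda/y_{\varepsilon}}\mu_{k}^{-\alpha}(\log(\lambda/\mu_{k}))^{m}$. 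Writing this as a Riemann--Stieltjes integral against $dN_{\mu}$, integrating by parts, inserting $N_{\mu}(t)=A_{\mu}t^{\alpha}+o(t^{\alpha})$, and substituting $u=\log(\lambda/t)$ turns it into $A_{\mu}\int\!\big(\alpha u^{m}+m u^{m-1}\big)\,du$ over an interval with right endpoint $\sim\log\lambda$, hence $\sim A_{\mu}\alpha(\log\lambda)^{m+1}/(m+1)$; collecting constants yields the stated formula. For \emph{(ii)}, $\tau<\alpha$ forces $\mu_{k}\to\infty$ and, by one Abel summation, $\sum_{k}\mu_{k}^{-\alpha}<\infty$; after the same harmless truncation one writes $(\log(\lambda/\mu_{k}))^{m}=(\log\lambda)^{m}(1-\log\mu_{k}/\log\lambda)^{m}$ and lets $\lambda\to\infty$ term by term, dominated convergence being justified by the bound $\mu_{k}^{-\alpha}$ (the finitely many $\mu_{k}<1$ being treated separately).

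The one delicate point is the uniformity in the Stieltjes estimate underlying \emph{(i)}: the summation over $\mu_{k}$ reaches up to order $\lambda$, so $M$ is evaluated at arguments as small as a constant, and one must check that the error terms coming from $M(y)=(1+o(1))By^{\alpha}(\alpha\log y)^{m}/m!$ and from $N_{\mu}(t)=A_{\mu}t^{\alpha}+o(t^{\alpha})$ integrate to strictly smaller order than the main term, and not merely to $\lambda^{\alpha}(\log\lambda)^{m+1}$ with an uncontrolled constant. This is done in the usual two-step fashion --- fix the accuracy of the $o$-terms first, choose the truncation level $y_{\varepsilon}$ afterwards --- and requires nothing beyond the hypotheses; in particular no analytic continuation of the Dirichlet series $\sum_{k}(\lambda^{(j)}_{k})^{-s}$ past its abscissa of convergence is needed, which is what makes the statement valid for arbitrary sequences.
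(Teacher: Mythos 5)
Your argument is correct and follows essentially the same route as the paper: your fact \emph{(i)} is precisely the induction step of Lemma \ref{cgl1}, your fact \emph{(ii)} is Lemma \ref{cgl2}, and the Stieltjes-integral estimates you describe (truncation, integration by parts against $dN_{\mu}$, and the two-step choice of accuracy before truncation level) are the ones carried out there. The only cosmetic difference is that you apply the dominated-sequence step once per subdominant index, whereas the paper bundles all subdominant sequences into a single product sequence and applies Lemma \ref{cgl2} once, using the factorization $\sum_{k}\bigl(\mu^{(1)}_{k}\bigr)^{-\alpha}=\prod_{j}\sum_{k}\bigl(\lambda^{(j)}_{k}\bigr)^{-\alpha}$.
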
 

We will divide the proof of Proposition \ref{cgp1} into two lemmas. The first lemma deals with the case in which all counting functions have asymptotic behavior of the same order.

\begin{lemma} \label{cgl1} If
$N_{j}(\lambda)\sim A_{j}\lambda^{\alpha},$ with $\alpha>0$ and $A_{j}\neq 0$, for $j=1,2,\dots,p$, then $(\ref{cgeq2})$ has asymptotics
$$
N(\lambda)\sim A \lambda^{\alpha}\frac{(\alpha \log \lambda)^{p-1}}{(p-1)!} , \ \ \ \lambda\to\infty,
$$
where $A=\prod_{j=1}^{p}A_{j}$.
\end{lemma}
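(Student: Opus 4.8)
The plan is to prove Lemma \ref{cgl1} by induction on $p$, using a convolution/Stieltjes-type argument that reduces the $p$-fold product to a $(p-1)$-fold product times a single sequence. For $p=1$ there is nothing to prove. For the inductive step, write
\[
N(\lambda)=\sum_{k=1}^{\infty} M\!\left(\frac{\lambda}{\lambda^{(p)}_{k}}\right),
\]
where $M$ is the counting function for the product of the first $p-1$ sequences; by the inductive hypothesis $M(\mu)\sim B\mu^{\alpha}(\alpha\log\mu)^{p-2}/(p-2)!$ with $B=\prod_{j=1}^{p-1}A_j$. The idea is that the sum over $k$ behaves like a Riemann–Stieltjes integral $\int_{1^-}^{\lambda} M(\lambda/t)\,dN_p(t)$, and since $N_p(t)\sim A_p t^{\alpha}$ one expects $dN_p(t)\approx \alpha A_p t^{\alpha-1}\,dt$ in an averaged sense, so that
\[
N(\lambda)\approx \alpha A_p\int_{1}^{\lambda} M\!\left(\frac{\lambda}{t}\right) t^{\alpha-1}\,dt = \alpha A_p \lambda^{\alpha}\int_{1}^{\lambda} M(s) s^{-\alpha-1}\,ds,
\]
after the substitution $s=\lambda/t$. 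Plugging in the asymptotics of $M(s)$, the integrand is $\sim (B/(p-2)!)\,\alpha^{p-2}(\log s)^{p-2} s^{-1}$, whose integral from $1$ to $\lambda$ is $\sim (B/(p-2)!)\,\alpha^{p-2}(\log\lambda)^{p-1}/(p-1)$. Multiplying through gives $N(\lambda)\sim A_p B\,\alpha^{p-1}\lambda^{\alpha}(\log\lambda)^{p-1}/(p-1)!$, which is the claim with $A=A_pB=\prod_{j=1}^p A_j$.

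To make the heuristic rigorous I would avoid differentiating $N_p$ and instead work directly with the sum, splitting it at a threshold. Fix a large parameter and write $N(\lambda)=\sum_{\lambda^{(p)}_k\le \sqrt{\lambda}} M(\lambda/\lambda^{(p)}_k)+\sum_{\sqrt\lambda<\lambda^{(p)}_k\le\lambda}M(\lambda/\lambda^{(p)}_k)$; the second sum is lower order because there $\lambda/\lambda^{(p)}_k<\sqrt\lambda$ and $M$ grows like a power times a log, while the number of such $k$ is $O(\lambda^\alpha)$, so a routine estimate (or symmetry in the roles of the sequences) controls it. For the main sum, replace $M(\lambda/\lambda^{(p)}_k)$ by its asymptotic equivalent with uniform error control: given $\varepsilon>0$ there is $R$ so that $(1-\varepsilon)g(\mu)\le M(\mu)\le(1+\varepsilon)g(\mu)$ for $\mu\ge R$, where $g(\mu)=B\mu^\alpha(\alpha\log\mu)^{p-2}/(p-2)!$. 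Then sandwich $\sum_{k} g(\lambda/\lambda^{(p)}_k)$ between Riemann–Stieltjes integrals against $N_p$ and use $N_p(t)\sim A_pt^\alpha$ together with an integration by parts to evaluate $\int g(\lambda/t)\,dN_p(t)\sim \alpha A_p\int g(\lambda/t)t^{\alpha-1}\,dt$. A clean way to justify this last step is Abel summation / partial integration: $\int_{1}^{\lambda} g(\lambda/t)\,dN_p(t) = [g(\lambda/t)N_p(t)]_1^\lambda - \int_1^\lambda N_p(t)\,d_t\!\left(g(\lambda/t)\right)$, and then substitute $N_p(t)=A_pt^\alpha(1+o(1))$ into the remaining integral, the $o(1)$ contributing a lower-order term after the final integration because the bulk of the integral comes from $t$ large.

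The main obstacle is the interchange between the discrete sum over the eigenvalue sequence $\{\lambda^{(p)}_k\}$ and the continuous integral: the sequence is ``rather arbitrary'' (only its counting function is controlled), so one cannot assume any regularity of the gaps $\lambda^{(p)}_{k+1}-\lambda^{(p)}_k$, and one must be careful that the logarithmic factor $(\log s)^{p-2}$ in $M(s)$ does not interact badly with the region $s$ close to $1$ (where $M$ is bounded but $g$ may be negative or ill-defined). Handling the small-$s$ regime requires noting that the contribution of $\lambda^{(p)}_k$ comparable to $\lambda$ is genuinely negligible — most easily seen by the symmetry of $N(\lambda)$ under permuting the $p$ sequences, which lets one always put the ``dangerous'' part onto a factor whose counting function is itself $O(\lambda^\alpha)$ and whose complementary integral is then elementary. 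Once the threshold split and the uniform sandwich are set up, the remaining computation is the elementary integral $\int_1^\lambda (\log s)^{p-2}s^{-1}\,ds=(\log\lambda)^{p-1}/(p-1)$, and letting $\varepsilon\to0$ closes the argument.
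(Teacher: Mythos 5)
Your overall strategy coincides with the paper's: induction on $p$, the identity $N(\lambda)=\sum_{\lambda^{(p)}_k\le\lambda}M(\lambda/\lambda^{(p)}_k)$, Abel summation against $N_p$, and the elementary integral $\int_1^{\lambda}(\log s)^{p-2}s^{-1}\,ds=(\log\lambda)^{p-1}/(p-1)$. However, there is a genuine error in your handling of the range $\sqrt{\lambda}<\lambda^{(p)}_k\le\lambda$. The sum $\sum_{\sqrt{\lambda}<\lambda^{(p)}_k\le\lambda}M(\lambda/\lambda^{(p)}_k)$ is \emph{not} of lower order: by the very Stieltjes computation you use for the main term, it is asymptotic to $A\lambda^{\alpha}(\alpha\log\lambda)^{p-1}\,2^{-(p-1)}/(p-1)!$, i.e.\ a fixed positive fraction of the leading term. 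Concretely, for $p=2$ and $\lambda^{(1)}_k=\lambda^{(2)}_k=k$ one has $\sum_{\sqrt{\lambda}<k\le\lambda}[\lambda/k]\sim\tfrac12\lambda\log\lambda$ while $N(\lambda)=D(\lambda)\sim\lambda\log\lambda$: the "second sum" is exactly half of everything. Your "routine estimate" cannot rescue this: bounding it by (number of terms) $\times$ (sup of $M$) gives $O(\lambda^{\alpha})\cdot O(\lambda^{\alpha/2}\log^{p-2}\lambda)$, which is far \emph{larger} than the main term; and the sharper bound via $\sum_{\sqrt{\lambda}<\lambda^{(p)}_k\le\lambda}(\lambda^{(p)}_k)^{-\alpha}\asymp\log\lambda$ lands precisely on the order $\lambda^{\alpha}\log^{p-1}\lambda$ of the main term, not below it. The internal inconsistency shows up in your own formulas: if you keep only $\lambda^{(p)}_k\le\sqrt{\lambda}$, the substitution $s=\lambda/t$ restricts to $s\in[\sqrt{\lambda},\lambda]$ and yields $\int_{\sqrt{\lambda}}^{\lambda}(\log s)^{p-2}s^{-1}\,ds=\bigl((\log\lambda)^{p-1}-(\tfrac12\log\lambda)^{p-1}\bigr)/(p-1)$, i.e.\ the wrong constant $A(1-2^{1-p})$ instead of $A$.

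The repair is the one the paper implements: keep the \emph{entire} range $\lambda^{(p)}_k\le\lambda$ in the main term $\sum_k g(\lambda/\lambda^{(p)}_k)$ (this is where the full integral $\int_1^{\lambda}$ legitimately comes from), and use the split at $\sqrt{\lambda}$ only to estimate the \emph{difference} $M-g$. For $\lambda^{(p)}_k\le\sqrt{\lambda}$ the argument $\lambda/\lambda^{(p)}_k\ge\sqrt{\lambda}$ tends to infinity uniformly, so $M-g=o(g)$ there and the error is $o(\lambda^{\alpha}\log^{p-1}\lambda)$. For $\sqrt{\lambda}<\lambda^{(p)}_k\le\lambda$ one bounds $|M(\mu)-g(\mu)|$ by $O(\mu^{\alpha}\log^{p-2}\lambda)+O(1)$ and sums against $(\lambda^{(p)}_k)^{-\alpha}$; since this only gives $O(\lambda^{\alpha}\log^{p-1}\lambda)$, one must refine it with exactly your $\varepsilon$--$R$ sandwich applied to $M-g$ (taking $|M-g|\le\varepsilon g$ for $\mu\ge R$ and using $N_p(\lambda)=O(\lambda^{\alpha})$ to absorb the finitely many $\mu<R$) to squeeze the bound down to $o(\lambda^{\alpha}\log^{p-1}\lambda)$. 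So your toolbox is right, but the threshold split must be applied to $M-g$, not to $M$ itself.
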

\begin{proof} We proceed by induction. Assume that
$$
\tilde{N}(\lambda)=\sum_{\lambda^{(1)}_{k_1}\lambda^{(2)}_{k_2}\dots\lambda^{(p-1)}_{k_{p-1}}\leq\lambda}1 \sim \tilde{A}\lambda^{\alpha}\frac{(\alpha\log\lambda)^{p-2}}{(p-2)!},
$$ 
with $\tilde{A}=\prod_{j=1}^{p-1}A_{j}.$ We then have,
\begin{align*}
N(\lambda)&=\sum_{\lambda^{(p)}_{k}\leq\lambda} \tilde{N}(\lambda/\lambda^{(p)}_{k})
\\
&
=\frac{\tilde{A}\alpha^{p-2}}{(p-2)!}\sum_{\lambda^{(p)}_{k}\leq\lambda}(\lambda/\lambda_k^{(p)})^{\alpha}(\log(\lambda/\lambda^{(p)}_{k}))^{p-2}+ \sum_{\lambda^{(p)}_{k}\leq\sqrt{\lambda}}o((\lambda/\lambda_k^{(p)})^{\alpha}\log^{p-2}\lambda)
\\
&
\ \ \ +O(\lambda^{\alpha}\log^{p-2}\lambda)\cdot\sum_{\sqrt{\lambda}<\lambda^{(p)}_{k}\leq\lambda}\frac{1}{(\lambda^{(p)}_{k})^{\alpha}}
\\
& =\frac{\tilde{A}\alpha^{p-2}}{(p-2)!}\sum_{\lambda^{(p)}_{k}\leq\lambda}(\lambda/\lambda_k^{(p)})^{\alpha}(\log(\lambda/\lambda^{(p)}_{k}))^{p-2}+o(\lambda^{\alpha}\log^{p-1} \lambda)+O(\lambda^{\alpha}\log^{p-2}\lambda)
\\
&
=\frac{\tilde{A}\alpha^{p-2}}{(p-2)!}\int_{0}^{\lambda}(\lambda/t)^{\alpha}(\log(\lambda/t))^{p-2}dN_{p}(t)+o(\lambda^{\alpha}\log^{p-1} \lambda)
\\
&
=\frac{\tilde{A}\alpha^{p-1}}{(p-2)!}\lambda^{\alpha}\int_{0}^{\lambda}(\log(\lambda/t))^{p-2}\frac{N_{p}(t)}{t^{\alpha+1}}dt+o(\lambda^{\alpha}\log^{p-1} \lambda)
\\
&
=\frac{A_{p}\tilde{A}\alpha^{p-1}}{(p-2)!}\lambda^{\alpha}\sum_{j=0}^{p-2}\binom{p-2}{j}(-1)^{\nu}(\log\lambda)^{p-2-\nu}\int_{1}^{\lambda}\frac{(\log t)^{\nu}}{t}dt+o(\lambda^{\alpha}\log^{p-1} \lambda)
\\
&
\sim
A\lambda^{\alpha}\frac{(\alpha\log \lambda)^{p-1}}{(p-2)!}\sum_{j=0}^{p-2}\binom{p-2}{j}\frac{(-1)^{\nu}}{\nu+1}
\\
&=A\lambda^{\alpha}\frac{(\alpha\log \lambda)^{p-1}}{(p-2)!}\int_{0}^{1}(1-t)^{p-2}dt
=A\lambda^{\alpha}\frac{(\alpha\log \lambda)^{p-1}}{(p-1)!}.
\end{align*}
\end{proof}

We also have,
\begin{lemma}
\label{cgl2} If 
$$M_{1}(\lambda)=\sum_{\mu^{(1)}_{k}\leq \lambda}1=O(\lambda^{\tau}) \ \mbox{ and } \  M_{2}(\lambda)=\sum_{\mu^{(2)}_{k}\leq \lambda}1\sim B \lambda^{\alpha}\log^{b}\lambda, \ \ \ \lambda\to\infty$$
where $0\leq\tau<\alpha$, $B\neq 0$, and $b\geq 0$, then
$$
M(\lambda)=\sum_{\mu^{(1)}_k\mu^{(2)}_k\leq \lambda}1\sim\tilde{B}
\lambda^{\alpha}\log^{b}\lambda, \ \ \ \lambda\to\infty,
$$ 
where $\tilde{B}=B
\sum_{k=1}^{\infty}(\mu^{(1)}_k)^{-\alpha}$.
\end{lemma}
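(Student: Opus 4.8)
The plan is to exploit the factorization of the counting function along the first index: grouping the pairs according to the value of $\mu^{(1)}_{k}$, one gets for every $\lambda$ large
$$
M(\lambda)=\sum_{\mu^{(1)}_{k}\leq\lambda}M_{2}\bigl(\lambda/\mu^{(1)}_{k}\bigr),
$$
so the task is reduced to controlling this weighted superposition of dilates of $M_{2}$. A preliminary observation is that the series $\sum_{k}(\mu^{(1)}_{k})^{-\alpha}$ converges: by Abel summation it equals $\alpha\int_{0}^{\infty}M_{1}(t)\,t^{-\alpha-1}\,dt$, the integrand vanishes for $t<\mu^{(1)}_{1}$ and is $O(t^{\tau-\alpha-1})$ as $t\to\infty$, which is integrable since $\tau<\alpha$. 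Hence $\tilde{B}=B\sum_{k}(\mu^{(1)}_{k})^{-\alpha}$ is a well-defined finite constant.

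Next I would introduce an auxiliary threshold $R\geq1$, kept fixed while $\lambda\to\infty$ and only afterwards let to grow, and split
$$
M(\lambda)=\sum_{\mu^{(1)}_{k}\leq R}M_{2}\bigl(\lambda/\mu^{(1)}_{k}\bigr)+\sum_{R<\mu^{(1)}_{k}\leq\lambda}M_{2}\bigl(\lambda/\mu^{(1)}_{k}\bigr)=:S_{1}(\lambda,R)+S_{2}(\lambda,R).
$$
The sum $S_{1}$ has only finitely many terms; for each of them $\lambda/\mu^{(1)}_{k}\to\infty$, so $M_{2}(\lambda/\mu^{(1)}_{k})\sim B(\mu^{(1)}_{k})^{-\alpha}\lambda^{\alpha}(\log\lambda-\log\mu^{(1)}_{k})^{b}\sim B(\mu^{(1)}_{k})^{-\alpha}\lambda^{\alpha}\log^{b}\lambda$, because $\log\mu^{(1)}_{k}$ is a constant; adding the finitely many equivalences gives $S_{1}(\lambda,R)\sim B\lambda^{\alpha}\log^{b}\lambda\sum_{\mu^{(1)}_{k}\leq R}(\mu^{(1)}_{k})^{-\alpha}$. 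For $S_{2}$ I would only use a crude uniform bound: from $M_{2}(\mu)\sim B\mu^{\alpha}\log^{b}\mu$ there is $C>0$ with $M_{2}(\mu)\leq C\mu^{\alpha}(\log(e+\mu))^{b}$ for all $\mu\geq1$ (for $\mu$ in a bounded range this reduces to boundedness of $M_{2}$ together with $\mu^{\alpha}(\log(e+\mu))^{b}\geq1$), and since $1\leq\lambda/\mu^{(1)}_{k}\leq\lambda$ throughout the range of $S_{2}$,
$$
0\leq S_{2}(\lambda,R)\leq C\,\lambda^{\alpha}\bigl(\log(e+\lambda)\bigr)^{b}\sum_{\mu^{(1)}_{k}>R}(\mu^{(1)}_{k})^{-\alpha}.
$$

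Finally, dividing by $\lambda^{\alpha}\log^{b}\lambda$, letting $\lambda\to\infty$ with $R$ fixed, and using $(\log(e+\lambda))^{b}\sim\log^{b}\lambda$, one obtains
$$
B\sum_{\mu^{(1)}_{k}\leq R}(\mu^{(1)}_{k})^{-\alpha}\leq\liminf_{\lambda\to\infty}\frac{M(\lambda)}{\lambda^{\alpha}\log^{b}\lambda}\leq\limsup_{\lambda\to\infty}\frac{M(\lambda)}{\lambda^{\alpha}\log^{b}\lambda}\leq B\sum_{\mu^{(1)}_{k}\leq R}(\mu^{(1)}_{k})^{-\alpha}+C\sum_{\mu^{(1)}_{k}>R}(\mu^{(1)}_{k})^{-\alpha};
$$
letting now $R\to\infty$, the tail of the convergent series tends to $0$ and both outer bounds converge to $\tilde{B}=B\sum_{k}(\mu^{(1)}_{k})^{-\alpha}$, so the limit exists and equals $\tilde{B}$, i.e.\ $M(\lambda)\sim\tilde{B}\lambda^{\alpha}\log^{b}\lambda$. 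The only genuinely delicate point is this interchange of the two limiting procedures: one must keep the contribution of the ``near-diagonal'' pairs (those with $\mu^{(1)}_{k}$ comparable to $\lambda$, for which $\lambda/\mu^{(1)}_{k}$ stays bounded and the asymptotics of $M_{2}$ are not available) uniformly negligible, and it is exactly the summability $\sum_{k}(\mu^{(1)}_{k})^{-\alpha}<\infty$, guaranteed by $\tau<\alpha$, that achieves this. An essentially equivalent alternative is to write $M(\lambda)/(\lambda^{\alpha}\log^{b}\lambda)$ as a Stieltjes integral $\int_{0^{-}}^{\infty}\lambda^{-\alpha}\log^{-b}\lambda\,M_{2}(\lambda/t)\,dM_{1}(t)$ and apply dominated convergence with dominating function a multiple of $t^{-\alpha}\mathbf{1}_{[\mu^{(1)}_{1},\infty)}(t)$; the bookkeeping for the region $t>\lambda$ makes the direct splitting above slightly cleaner.
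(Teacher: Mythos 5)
Your proof is correct and follows essentially the same route as the paper: both start from the identity $M(\lambda)=\sum_{\mu^{(1)}_{k}\leq\lambda}M_{2}(\lambda/\mu^{(1)}_{k})$ and rest on the convergence of $\sum_{k}(\mu^{(1)}_{k})^{-\alpha}$, which follows from $\tau<\alpha$ via Abel summation exactly as you argue. The only difference is presentational: the paper substitutes the asymptotic of $M_{2}$ directly into the sum and absorbs the errors into $o$- and $O$-terms, whereas you justify the same interchange of limits by the truncation-at-$R$ squeeze, which is a more explicit way of writing the same estimate.
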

\begin{proof} Observe first that
$$
\sum_{k=1}^{\infty}\frac{1}{\left(\mu^{(1)}_{k}\right)^{\alpha}}= \int_{0}^{\infty}t^{-\alpha}dM_{1}(t)=\alpha\int_{0}^{\infty}\frac{M_1(t)}{t^{1+\alpha}}dt
$$
is convergent because $t^{-1-\alpha}M_1(t)=O(t^{-1-(\alpha-\tau)})$. Now,
\begin{align*}
M(\lambda)&=\sum_{\mu^{(1)}_{k}\leq\lambda}M_{2}(\lambda/\mu^{(1)}_{k})=\tilde{B}\lambda^{\alpha}\log^{b}\lambda-B\int_{0}^{\infty}\frac{\log^{b} t}{t^{\alpha}}dM_{1}(t)+o(\lambda^{\alpha}\log^{b}\lambda)
\\
&
=\tilde{B}\lambda^{\alpha}\log^{b}\lambda+O\left(\lambda^{\alpha}(\log^{b}\lambda) \int_{1}^{\lambda}\frac{dt}{t^{1+\alpha-t}}\right)+o(\lambda^{\alpha}\log^{b}\lambda)
\\
&
\sim \tilde{B}\lambda^{\alpha}\log^{b}\lambda,
\end{align*}
as claimed.
\end{proof}

We can now prove Proposition \ref{cgp1}.
\begin{proof}[Proof of Proposition \ref{cgp1}] Let $\{j_{\nu+1},\dots,j_{p}\}=\{1,2,\dots,p\}\setminus\{j_{1},\dots,j_{\nu}\}$
We arrange the two sequences of products
\begin{equation}
\label{cgeq7}
\prod_{q=\nu+1}^{p}\lambda^{(j_{q})}_{k_{j_{q}}}\ \ \mbox{ and }  \ \ 
\prod_{q=1}^{\nu}\lambda^{(j_{q})}_{k_{j_{q}}}
\end{equation}
in two non-decreasing sequences $\{\mu_{k}^{(1)}\}_{k=1}^{\infty}$ and $\{\mu_{k}^{(2)}\} _{k=1}^{\infty}$, respectively, where each element in these sequences is repeated as many times as it can be represented as in (\ref{cgeq7}). The hypothesis (\ref{cgeq3}) and Lemma \ref{cgl1} yield
$$
M_{2}(\lambda)=\sum_{\mu^{(2)}_{k}\leq \lambda}1\sim B \lambda^{\alpha} \log ^{\nu-1}\lambda , \ \ \ \lambda\to\infty,
$$
where $B=(\alpha^{\nu-1}/(\nu-1)!)\prod_{q=1}^{\nu}A_{j_{q}}$. On the other hand,  using (\ref{cgeq4}), one easily verifies that 
$$
M_{1}(\lambda)=\sum_{\mu^{(1)}_{k}\leq \lambda}1=O(\lambda^{\tau}\log^{p-\nu}\lambda), \ \ \ \lambda\rightarrow \infty.
$$ Applying Lemma \ref{cgl2} and noticing that
$$
\sum_{k=1}^\infty \frac{1}{\left(\mu^{(1)}_{k}\right)^{\alpha}}=\left(\prod_{j\notin\left\{j_{1},\dots,j_{\nu}\right\}}\left(\sum_{k=1}^{\infty}\frac{1}{\left(\lambda^{(j)}_{k}\right)^{\alpha}}\right)\right),
$$
we obtain the asymptotic formula (\ref{cgeq5}) with the constant (\ref{cgeq6}).
\end{proof}
\subsection{Remainders}\label{cgr} We now study the remainder in (\ref{cgeq5}). We impose stronger assumptions than (\ref{cgeq3}) on the leading counting functions.

We start by looking at the case when a single counting function dominates all others.

\begin{proposition} \label{cgp2} Assume that there are non-negative numbers $\tau<\eta<\alpha$ and an index $l\in\{1,\dots,p\}$ such that $N_{j}(\lambda)=O(\lambda^{\tau})$, for $j\neq l$, and $N_{l}$ satisfies
\begin{equation}
\label{cgeq8}
N_{l}(\lambda)= A_{l}\lambda^{\alpha}+O(\lambda^{\eta}), \ \ \ \lambda\to\infty,
\end{equation}
with $A_{l}\neq0$. Then, 
\begin{equation}
\label{cgeq9}
N(\lambda)= A \lambda^{\alpha}+O(\lambda^{\eta}) , \ \ \ \lambda\to\infty,
\end{equation}
where
\begin{equation}
\label{cgeq10}
A= A_{l}\prod_{j\neq l}\left(\sum_{k=1}^{\infty}\frac{1}{\left(\lambda^{(j)}_{k}\right)^{\alpha}}\right).
\end{equation}
\end{proposition}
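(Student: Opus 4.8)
The plan is to reduce the multi-factor problem to a single application of a perturbation lemma, exactly mirroring how Proposition~\ref{cgp1} was deduced from Lemmas~\ref{cgl1} and~\ref{cgl2}, but now tracking the $O(\lambda^\eta)$ error rather than just a $\sim$ asymptotic. First I would bundle all the non-dominant sequences: let $\{\mu^{(1)}_k\}_{k=1}^\infty$ be the non-decreasing rearrangement (with multiplicity) of the products $\prod_{j\neq l}\lambda^{(j)}_{k_j}$, so that
$$
N(\lambda)=\sum_{\mu^{(1)}_k\leq\lambda} N_l\bigl(\lambda/\mu^{(1)}_k\bigr).
$$
Since each $N_j(\lambda)=O(\lambda^\tau)$ for $j\neq l$, an easy induction (as in the proof of Proposition~\ref{cgp1}) gives $M_1(\lambda):=\#\{k:\mu^{(1)}_k\leq\lambda\}=O(\lambda^\tau\log^{p-2}\lambda)$, hence $M_1(\lambda)=O(\lambda^{\tau'})$ for any $\tau'$ with $\tau<\tau'<\eta$. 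In particular $\sum_k (\mu^{(1)}_k)^{-\alpha}$ and $\sum_k(\mu^{(1)}_k)^{-\eta}$ both converge.

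Next I would substitute the hypothesis \eqref{cgeq8} into the sum. Split at $\lambda$: for $\mu^{(1)}_k\leq\lambda$ write $N_l(\lambda/\mu^{(1)}_k)=A_l(\lambda/\mu^{(1)}_k)^\alpha+O\bigl((\lambda/\mu^{(1)}_k)^\eta\bigr)$, which is legitimate since $\lambda/\mu^{(1)}_k\geq 1$. This yields
$$
N(\lambda)=A_l\lambda^\alpha\sum_{\mu^{(1)}_k\leq\lambda}\frac{1}{(\mu^{(1)}_k)^\alpha}+O\!\left(\lambda^\eta\sum_{\mu^{(1)}_k\leq\lambda}\frac{1}{(\mu^{(1)}_k)^\eta}\right).
$$
The second term is $O(\lambda^\eta)$ because $\sum_k(\mu^{(1)}_k)^{-\eta}<\infty$. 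For the first term, complete the sum to the full series:
$$
A_l\lambda^\alpha\sum_{\mu^{(1)}_k\leq\lambda}\frac{1}{(\mu^{(1)}_k)^\alpha}=A_l\lambda^\alpha\sum_{k=1}^\infty\frac{1}{(\mu^{(1)}_k)^\alpha}-A_l\lambda^\alpha\sum_{\mu^{(1)}_k>\lambda}\frac{1}{(\mu^{(1)}_k)^\alpha},
$$
and the tail is estimated by $\sum_{\mu^{(1)}_k>\lambda}(\mu^{(1)}_k)^{-\alpha}\leq\lambda^{\eta-\alpha}\sum_{\mu^{(1)}_k>\lambda}(\mu^{(1)}_k)^{-\eta}=O(\lambda^{\eta-\alpha})$, so that $A_l\lambda^\alpha$ times this tail is again $O(\lambda^\eta)$. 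Finally, the multiplicativity identity $\sum_k(\mu^{(1)}_k)^{-\alpha}=\prod_{j\neq l}\sum_{k=1}^\infty(\lambda^{(j)}_k)^{-\alpha}$ — which holds by absolute convergence, expanding the product — identifies the constant as \eqref{cgeq10}, giving \eqref{cgeq9}.

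The only genuinely delicate point is making sure the bound on $M_1$ is strong enough; one must verify that rearranging a product of $p-1$ sequences, each with counting function $O(\lambda^\tau)$, produces a counting function that is still $O(\lambda^{\tau'})$ for some $\tau'<\eta$ — the logarithmic factors from the convolution are harmless since $\tau<\eta$ leaves room to absorb them. Everything else is a routine estimate of tails of convergent Dirichlet-type series, of exactly the flavor already used in Lemma~\ref{cgl2}. I would therefore expect the write-up to be short: establish the $M_1$ bound, substitute, and collect the two $O(\lambda^\eta)$ contributions.
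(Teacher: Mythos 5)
Your argument is correct and is essentially the paper's own: both proofs substitute the expansion \eqref{cgeq8} into a convolution identity for $N(\lambda)$ and control the two resulting contributions by the convergence of $\sum_k \mu_k^{-\alpha}$ and $\sum_k \mu_k^{-\eta}$ (the latter using $\tau<\eta$), plus a tail estimate of order $O(\lambda^{\eta})$. The only difference is organizational — the paper peels off one non-dominant factor at a time by induction on $p$, keeping $N_l$ inside the inductive hypothesis, whereas you bundle all the non-dominant sequences into one rearranged sequence $\{\mu^{(1)}_k\}$ (as in the paper's proof of Proposition \ref{cgp1}) and perform a single perturbation step; both routes rest on the same estimates and your handling of the delicate point (the bound $M_1(\lambda)=O(\lambda^{\tau'})$ with $\tau<\tau'<\eta$ absorbing the logarithms) is sound.
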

\begin{proof} By renaming the sequences, we may assume that $l=1$. 
We use a recursive argument. Suppose that we have already established
$$
\tilde{N}(\lambda)=\sum_{\lambda^{(1)}_{k_1}\lambda^{(2)}_{k_2}\dots\lambda^{(p-1)}_{k_{p-1}}\leq\lambda}1= \tilde{A}\lambda^{\alpha}+O(\lambda^{\eta}).
$$ 
Since for any $b>\tau$
$$
\sum_{\lambda\leq \lambda^{(p)}_{k}}\frac{1}{(\lambda^{(p)}_{k})^{b}}=\frac{ N_{p}(\lambda)}{\lambda^{b}}+b\int_{\lambda}^{\infty}\frac{N_{p}(t)}{t^{b+1}}\:dt=O(\lambda^{\tau-b}), \ \ \ \lambda\to\infty,
$$
we have
\begin{align*}
N(\lambda)&=\sum_{\lambda^{(p)}_{k}\leq\lambda} \tilde{N}(\lambda/\lambda^{(p)}_{k})
\\
&
= \tilde{A}\lambda^{\alpha_1}\left(\sum_{k=1}^{\infty}\frac{1}{(\lambda^{(p)}_{k})^{\alpha}}\right)+O(\lambda^{\tau})+\sum_{\lambda^{(p)}_{k}\leq\lambda}O((\lambda/\lambda^{(p)}_{k})^{\eta})
\\
&
=A\lambda^{\alpha_1}+O(\lambda^{\tau})+O(\lambda^{\eta}),
\end{align*}
which shows (\ref{cgeq9}).
\end{proof}

For the analysis of the remaining case, we will employ a complex Tauberian theorem of Aramaki \cite{aramaki1988}.

\begin{proposition}
\label{cgp3} Assume there are non-negative numbers $\tau<\alpha$ and indices $j_{1},\dots,j_{\nu}$, where $1\leq\nu\leq p$, such that 
\begin{equation}
\label{cgeq11}
N_{j_{q}}(\lambda)= A_{j_{q}}\lambda^{\alpha}+O(\lambda^\tau), \ \ \ \lambda\to\infty, \ \ \ q=1,\dots,\nu,
\end{equation}
with $A_{j_{q}}\neq0$, and (\ref{cgeq4}) holds. Then, 
 there exists $\eta<\alpha $ such that $(\ref{cgeq2})$ has asymptotic expansion
\begin{align}
\label{cgeq12}
N(\lambda)&=\sum_{q=1}^{\nu} \frac{B_{q}}{(q-1)!}\left(\frac{d}{dz}\right)^{q-1}\left.\left(\frac{\lambda^{z}}{z}\right)\right|_{z=\alpha} +O(\lambda^{\eta})
\\
&
=\lambda^{\alpha}\left(A\frac{(\alpha\log \lambda)^{\nu-1}}{(\nu-1)!}+\sum_{q=0}^{\nu-2}C_{q}\log^{q}\lambda\right)+O(\lambda^{\eta})\nonumber
\end{align}
where $A$ is given by $(\ref{cgeq6})$ and the constants $B_{q}$ can be computed as
\begin{equation}
\label{cgeq13}
B_{q}= \frac{1}{(\nu-q)!}\left(\frac{d}{dz}\right)^{\nu-q}\left.\left((z-\alpha)^{\nu}\prod_{j=1}^{p}\sum_{k=1}^{\infty}\frac{1}{\left(\lambda^{(j)}_{k}\right)^{z}}\right)\right|_{z=\alpha}
\end{equation}
\end{proposition}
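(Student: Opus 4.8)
The plan is to connect the counting function $N(\lambda)$ with the Dirichlet series $\prod_{j=1}^{p}\zeta_{j}(z)$, where $\zeta_{j}(z)=\sum_{k=1}^{\infty}(\lambda^{(j)}_{k})^{-z}$, and then invoke Aramaki's complex Tauberian theorem to read off the asymptotic expansion. First I would record that $N(\lambda)$ is exactly the summatory function of the Dirichlet series coefficients of $F(z):=\prod_{j=1}^{p}\zeta_{j}(z)$, i.e. $N(\lambda)=\sum_{n: \mu_{n}\leq\lambda}a_{n}$ where $\sum a_{n}\mu_{n}^{-z}=F(z)$ and $\{\mu_{n}\}$ enumerates the products $\lambda^{(1)}_{k_{1}}\cdots\lambda^{(p)}_{k_{p}}$. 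Equivalently, $F(z)=z\int_{0}^{\infty}N(t)t^{-z-1}\,dt$ in the region of absolute convergence.

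Next I would analyze the analytic structure of $F$. For each $j\in\{j_{1},\dots,j_{\nu}\}$, the hypothesis (\ref{cgeq11}) together with partial summation shows that $\zeta_{j}(z)$ extends meromorphically past $\mathrm{Re}\, z=\alpha$ with a simple pole at $z=\alpha$ of residue $\alpha A_{j}$, and is holomorphic and of at most polynomial growth in vertical strips for $\tau<\mathrm{Re}\, z\leq\alpha$ (apart from that pole); the $O(\lambda^{\tau})$ error gives holomorphy up to $\mathrm{Re}\, z=\tau$. For the remaining indices $j\notin\{j_{1},\dots,j_{\nu}\}$, assumption (\ref{cgeq4}) shows $\zeta_{j}(z)$ is holomorphic and bounded for $\mathrm{Re}\, z\geq\alpha$, in particular at $z=\alpha$ with value $\sum_{k}(\lambda^{(j)}_{k})^{-\alpha}$. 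Hence $F$ has a pole of order exactly $\nu$ at $z=\alpha$, is holomorphic for $\mathrm{Re}\, z\geq\alpha$ otherwise, and extends holomorphically to some half-plane $\mathrm{Re}\, z>\eta_{0}$ for a suitable $\eta_{0}<\alpha$ with polynomial vertical growth there. The Laurent coefficients $B_{q}$ of $F$ at $z=\alpha$ — the residues of $(z-\alpha)^{-q}F(z)$-type contributions — are precisely the numbers in (\ref{cgeq13}), by the standard formula for the Laurent expansion of a function with a pole of order $\nu$; the leading one corresponds to the constant $A$ in (\ref{cgeq6}) since $B_{\nu}=\alpha^{\nu}\prod_{q}A_{j_{q}}\prod_{j\notin\{\cdots\}}\zeta_{j}(\alpha)=\alpha^{\nu-1}A$ up to the combinatorial factor $(\nu-1)!$ produced by differentiating $\lambda^{z}/z$.

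Then I would apply Aramaki's Tauberian theorem to $F(z)=z\int_{0}^{\infty}N(t)t^{-z-1}\,dt$. Writing $F(z)=\sum_{q=1}^{\nu}B_{q}(z-\alpha)^{-q}+G(z)$ with $G$ holomorphic on $\mathrm{Re}\, z>\eta_{0}$ and polynomially bounded in vertical strips, the theorem converts each principal-part term $B_{q}(z-\alpha)^{-q}$ into the contribution $\frac{B_{q}}{(q-1)!}\left(\frac{d}{dz}\right)^{q-1}\left(\frac{\lambda^{z}}{z}\right)\big|_{z=\alpha}$ (the inverse Mellin transform of $z^{-1}(z-\alpha)^{-q}$, obtained by moving the contour past $z=\alpha$ and picking up the residue, with the remaining contour integral over $\mathrm{Re}\, z=\eta$ for some $\eta\in(\eta_{0},\alpha)$ contributing $O(\lambda^{\eta})$). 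Combining the terms and using $N$'s monotonicity to upgrade the mean-value estimate to a pointwise one yields (\ref{cgeq12}), and rearranging the derivative expansion into powers of $\log\lambda$ gives the second displayed form with the stated top coefficient $A(\alpha\log\lambda)^{\nu-1}/(\nu-1)!$ and lower-order constants $C_{q}$.

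The main obstacle is verifying the precise hypotheses of Aramaki's theorem — in particular the polynomial vertical-growth bound on $F$ (equivalently on each $\zeta_{j}$) in the strip $\eta_{0}<\mathrm{Re}\, z<\alpha$, and the location of the abscissa $\eta_{0}$ to which $F$ continues. The growth bound does not follow formally from (\ref{cgeq11}) and (\ref{cgeq4}) alone; it requires the partial-summation representation $\zeta_{j}(z)-\alpha A_{j}/(z-\alpha)=z\int_{1}^{\infty}(N_{j}(t)-A_{j}t^{\alpha})t^{-z-1}\,dt$, whose right side is $O(|z|)$ uniformly for $\mathrm{Re}\, z\geq\tau+\delta$ precisely because the integrand is $O(t^{-1-\delta})$, so the needed bound is genuinely linear in $|\mathrm{Im}\, z|$ and Aramaki's framework accommodates it; one must simply check this carefully and fix $\eta$ slightly above $\max(\tau,\eta_{0})$. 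A secondary technical point is making sure the product of $\nu$ functions each growing like $|z|$ still fits the admissible growth class, which it does since the bound is polynomial.
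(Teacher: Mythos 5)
Your proposal follows essentially the same route as the paper: identify $N$ as the summatory function of the Dirichlet series $F(z)=\prod_{j}\sum_{k}(\lambda^{(j)}_{k})^{-z}$, establish its meromorphic continuation to $\Re e\, z>\tau$ with a single pole of order $\nu$ at $z=\alpha$ together with polynomial growth on vertical strips, and invoke Aramaki's Tauberian theorem; your identification of the $B_{q}$ as the Laurent coefficients and your partial-summation verification of the growth bound are exactly the (tersely stated) steps of the paper's proof. The argument is correct.
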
 
\begin{proof}
Set 
$$
F_{j}(z)=\int_{0}^{\infty}t^{-z}dN_{j}(t)=\sum_{k=1}^{\infty}\frac{1}{\left(\lambda^{(j)}_{k}\right)^{z}}, \ \ \ j=1,\dots,p,
$$
and
$$
F(z)=\int_{0}^{\infty}t^{-z}dN(t).
$$
It is easy to verify that $F(z)$ and the $F_{j}(z)$ are analytic on the half-plane $\Re e\;z>\alpha$ and they are connected by the formula
$$
F(z)=\prod_{j=1}^{p}F_{j}(z).
$$
 Moreover, $F_{j}(z)$ is analytic on $\Re e\: z>\tau$ if $j\not\in\{j_{1},\dots,j_{q}\}$, whereas
$$
F_{j_{q}}(z)-\frac{A_{j_{q}}\alpha}{z-\alpha}, \ \ \ q=1,\dots,\nu,
$$
extend analytically to the same half-plane. Furthermore, these functions are of at most polynomial growth on any strip $\tau<a<\Re e\: z<b$. Thus, $F$ has also at most polynomial growth on such strips and it is meromorphic on $\Re e\: z>\tau$, having a single pole at $z=\alpha$ of order $\nu$. The hypotheses from Aramaki's Tauberian theorem are therefore satisfied, and the result follows at once from it.
\end{proof}
\subsection{Lower order terms in some special cases}  When the $\{\lambda^{(j)}_{k}\}_{k=1}^{\infty}$ arise as $\lambda^{(j)}_{k}=(c_{j}(k-1)+b_{j})^{\beta_{j}}$, where $c_{j}$, $b_{j}$, $\beta_{j}$ are positive constants, and one assumes $\beta_{p}>\beta_{p-1}>\dots>\beta_{1}> 0,$ it is possible to improve the asymptotic formula (\ref{cgeq9}) by giving lower order terms in the asymptotic expansion. Given $\vec {c}=(c_{1},c_{2},\dots,c_{p})$, $\vec {b}=(b_{1},b_{2},\dots,b_{p})$, $\vec {\beta}=(\beta_{1},\beta_{2},\dots,\beta_{p})\in\mathbb{R}_{+}^{p}$, we are interested in this subsection in the asymptotic behavior of the counting function 
\begin{equation}
\label{cseq1}
D^{\vec\beta}_{\vec c,\vec b}(\lambda)=\#\left\{(k_1,\dots,k_p)\in\mathbb{N}^{p}:\ (c_1k_1+b_1)^{\beta_1}(c_2k_2+b_2)^{\beta_2}\dots (c_{p}k_{p}+b_{p})^{\beta_{p}}\leq\lambda\right\}.
\end{equation}
For the constants in our expansions, we shall need the Hurwitz zeta function \cite[p. 251]{apostol}. It is defined for fixed $a>0$ as
\begin{equation}\label{cseq2}
\zeta(z;a)=\sum_{k=0}^{\infty}\frac{1}{(k+a)^{z}}, \ \ \ \Re e z>1.
\end{equation}
It is well-known that (\ref{cseq2}) admits meromorphic continuation to the whole complex plane, with a simple pole at $z=1$ with residue 1 (cf. \cite[p. 254]{apostol} or \cite[p. 348]{estrada-kanwal}). In particular, when $a=1$ we recover $\zeta(z)=\zeta(z,1)$, the Riemann zeta function. Using the Euler-Maclaurin summation formula \cite{apostol,estrada-kanwal,ivic}, one easily deduces the following asymptotic formula
\begin{equation}
\label{cseq3} 
\sum_{0\leq k\leq \lambda}\frac{1}{(k+a)^{s}}=\frac{(\lambda+a)^{1-s}}{1-s}+\zeta(s;a)+O(\lambda^{-s}), \ \ \ \lambda\to\infty, \mbox{ when }0<s \mbox{ and }s\neq 1.
\end{equation}

Observe that Proposition \ref{cgp1} immediately yields the dominant term in the asymptotic expansion of (\ref{cseq1}),
\begin{equation}
\label{cseq4} 
D^{\vec\beta}_{\vec c,\vec b}(\lambda) \sim \frac{1}{c_{1}}\left(\prod_{j=2}^{p}\frac{\zeta(\beta_{j}/\beta_{1};b_{j}/c_{j})}{c_{j}^{\beta_{j}/\beta_{1}}}\right) \lambda^{\frac{1}{\beta_{1}}} , \ \ \ \lambda\to\infty.
\end{equation}

We begin with the analysis of the case $p=2$. The proof of the following lemma is inspired by the classical Dirichlet hyperbola method \cite[p. 57]{apostol}.

\begin{lemma}
\label{lemma1}
Let $\vec \beta=(\beta_{1},\beta_{2})$ be such that $0<\beta_{1}<\beta_{2}$. Then,
\begin{equation}
\label{cseq5}
D^{\vec\beta}_{\vec c,\vec b}(\lambda)=\frac{\zeta(\beta_{2}/\beta_{1}; b_{2}/c_2)}{c_1c_{2}^{\beta_2/\beta_1}} \lambda^{\frac{1}{\beta_{1}}}+\frac{\zeta(\beta_{1}/\beta_{2}; b_{1}/c_1)}{c_2c_{1}^{\beta_1/\beta_2}} \lambda^{\frac{1}{\beta_{2}}}+O(\lambda^{\frac{1}{\beta_{1}+\beta_{2}}}).
\end{equation}
\end{lemma}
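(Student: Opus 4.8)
The proof is a two‑variable lattice point count carried out by the Dirichlet hyperbola method. Write $s=\beta_2/\beta_1>1$ and set $\mu=\lambda^{1/\beta_1}$, so that the condition $(c_1k_1+b_1)^{\beta_1}(c_2k_2+b_2)^{\beta_2}\le\lambda$ becomes $(c_1k_1+b_1)(c_2k_2+b_2)^{s}\le\mu$. I would split the lattice region along the ``diagonal'' $(c_2k_2+b_2)^{s}=(c_1k_1+b_1)$, i.e. count separately the points with $c_2k_2+b_2\le T$ and the points with $c_1k_1+b_1\le T^{s}$, where $T$ is a threshold to be optimized, and then subtract the points counted twice, namely those with $c_2k_2+b_2\le T$ \emph{and} $c_1k_1+b_1\le T^s$. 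Concretely,
\begin{align*}
D^{\vec\beta}_{\vec c,\vec b}(\lambda)
&=\sum_{c_2k_2+b_2\le T}\#\Bigl\{k_1:\ c_1k_1+b_1\le \mu (c_2k_2+b_2)^{-s}\Bigr\}
\\
&\quad+\sum_{c_1k_1+b_1\le T^{s}}\#\Bigl\{k_2:\ c_2k_2+b_2\le \bigl(\mu(c_1k_1+b_1)^{-1}\bigr)^{1/s}\Bigr\}
\\
&\quad-\#\Bigl\{(k_1,k_2):\ c_2k_2+b_2\le T,\ c_1k_1+b_1\le T^{s}\Bigr\},
\end{align*}
which is valid because every lattice point in the hyperbolic region lies in at least one of the first two sums, being counted twice precisely when it lies in the small rectangle subtracted. (Here $T$ will be chosen so that $T\asymp\mu^{1/(1+s)}$, which corresponds to $\lambda^{1/(\beta_1+\beta_2)}$ after unravelling.)

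Next I would evaluate each inner count by the elementary estimate $\#\{k\in\mathbb N: c k+b\le x\}=x/c+O(1)$ and then sum, using the key asymptotic formula \eqref{cseq3} (the Euler–Maclaurin estimate for $\sum_{0\le k\le x}(k+a)^{-s}$) in the guise $\sum_{0\le k}(c_2k+b_2)^{-s}\mathbf 1_{c_2k+b_2\le T}=c_2^{-s}\bigl[\zeta(s;b_2/c_2)+\tfrac{(T/c_2)^{1-s}}{1-s}+O(T^{-s})\bigr]$. The first sum thus produces the main term $\dfrac{\mu}{c_1}\cdot c_2^{-s}\zeta(s;b_2/c_2)$, which is exactly $\dfrac{\zeta(\beta_2/\beta_1;b_2/c_2)}{c_1c_2^{\beta_2/\beta_1}}\lambda^{1/\beta_1}$, together with a contribution of order $\mu\cdot T^{1-s}$ (a \emph{negative} power of $T$ since $s>1$) and an $O(T)$ error from the $O(1)$'s. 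The second sum, after the substitution $1/s=\beta_1/\beta_2<1$, produces by the same computation the second main term $\dfrac{\zeta(\beta_1/\beta_2;b_1/c_1)}{c_2c_1^{\beta_1/\beta_2}}\lambda^{1/\beta_2}$ plus an error of order $\mu^{1/s}\cdot(T^{s})^{1-1/s}=\mu^{1/s}T^{s-1}$ and an $O(T^{s})$ error; crucially the leading divergent pieces $\mu^{1/s}\cdot(\text{power of }T^{s})$ from the second sum cancel against the $\mu\cdot T^{1-s}$ piece of the first sum and against the rectangle term $\#\{\cdots\}= (T/c_2)(T^{s}/c_1)+O(T^{s}+T)=c_1^{-1}c_2^{-1}T^{1+s}+O(T^{s})$ — one must check this cancellation carefully, since $\mu\,T^{1-s}$, $\mu^{1/s}T^{s-1}$ and $T^{1+s}$ are all of the same order $\mu^{2/(1+s)}$ when $T\asymp\mu^{1/(1+s)}$. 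I expect this bookkeeping of the three competing ``boundary'' terms to be the main obstacle: verifying that the formula \eqref{cseq3} is applied with the \emph{same} constant $\zeta(s;b_2/c_2)$ on both the full series and the truncated one, so that the truncation tails combine to cancel.

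Finally, collecting errors: the first sum contributes $O(T)$, the second $O(T^{s})$, the rectangle $O(T^{s})$ (assuming $s>1$ so $T^s$ dominates $T$), and the leftover from formula \eqref{cseq3} contributes $O(\mu T^{-s})$ from the tail and $O(\mu^{1/s}(T^{s})^{-1/s})=O(\mu^{1/s}T^{-1})$. Balancing the dominant error $\mu\,T^{-s}+\mu^{1/s}T^{-1}$ against $T^{s}$: choosing $T=\mu^{1/(s+1)}$ makes $\mu T^{-s}=\mu^{1/(s+1)}=T$, $\mu^{1/s}T^{-1}=\mu^{(1/s)-1/(s+1)}=\mu^{1/(s(s+1))}$ (smaller), and $T^{s}=\mu^{s/(s+1)}$. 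Hence the error is $O(\mu^{s/(s+1)})=O(\lambda^{\frac{1}{\beta_1}\cdot\frac{s}{s+1}})$, and since $\dfrac{1}{\beta_1}\cdot\dfrac{s}{s+1}=\dfrac{1}{\beta_1}\cdot\dfrac{\beta_2/\beta_1}{\beta_2/\beta_1+1}=\dfrac{\beta_2}{\beta_1(\beta_1+\beta_2)}$, this is $O(\lambda^{\beta_2/(\beta_1(\beta_1+\beta_2))})$, which is \emph{not yet} the claimed $O(\lambda^{1/(\beta_1+\beta_2)})$. So one should instead balance $T^{s}$ against the larger of the two error sources more cleverly, or — what I believe is the intended route — note that by symmetry one is free to cut the hyperbola the other way as well, and the optimal choice is $T$ with $T^{1+s}\asymp \mu^{2/(?)}$; working in the original variables, setting the threshold so that $c_2k_2+b_2\le\lambda^{1/(\beta_1+\beta_2)}$ exactly equalizes $T\leftrightarrow T^{s}$ in the \emph{$\lambda$-homogeneous} scaling and yields remainder $O(\lambda^{1/(\beta_1+\beta_2)})$ directly. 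I would therefore phrase the whole argument homogeneously in $\lambda$ from the start (rather than in $\mu$), splitting at $(c_2k_2+b_2)^{\beta_2}\le \lambda^{\beta_2/(\beta_1+\beta_2)}$, i.e. $(c_1k_1+b_1)^{\beta_1}\le\lambda^{\beta_1/(\beta_1+\beta_2)}$, which is the genuinely symmetric cut; then the three boundary contributions are each $O(\lambda^{1/(\beta_1+\beta_2)})$ and the tails from \eqref{cseq3} are smaller, giving the lemma.
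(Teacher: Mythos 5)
Your final argument---the Dirichlet hyperbola method with the symmetric cut at $\lambda^{1/(\beta_1+\beta_2)}$, the Euler--Maclaurin formula (\ref{cseq3}) applied to the two partial sums, and the cancellation of the three boundary terms of order $\lambda^{2/(\beta_1+\beta_2)}$---is exactly the paper's proof, and your initial asymmetric threshold $U=T^{s}$ was rightly discarded. The cancellation you flag as the main obstacle does check out: the two coefficients $\beta_2/(\beta_2-\beta_1)$ and $\beta_1/(\beta_1-\beta_2)$ produced by (\ref{cseq3}) sum to $1$, exactly matching the subtracted rectangle term $\lambda^{2/(\beta_1+\beta_2)}$.
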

\begin{proof} Since $D^{\vec\beta}_{\vec c,\vec b}(\lambda)=D^{\vec \beta}_{\vec e,\vec d}(\lambda/(c_1^{\beta_1}c_2^{\beta_2}))$, where $\vec{e}=(1,1)$ and $\vec{d}=(b_1/c_1,b_2/c_2)$, we may assume that $c_1=c_2=1$. For ease of writing, we set $D^{\vec\beta}_{\vec b}=D^{\vec\beta}_{\vec e,\vec b}$.
We have that
\begin{align*}
D^{\vec\beta}_{\vec b}(\lambda)&=
\sum_{(k_1+b_1)^{\beta_1}(k_2+b_2)^{\beta_2}\leq \lambda} 1
\\
&
= \sum_{k_{1}+b_1\leq \lambda^{1/(\beta_{1}+\beta_{2})}} \left(\frac{\lambda}{(k_{1}+b_1)^{\beta_{1}}}\right)^{\frac{1}{\beta_{2}}}-k_{1}
+ \sum_{k_{2}+b_2\leq \lambda^{1/(\beta_{1}+\beta_{2})}} \left(\frac{x}{(k_{2}+b_2)^{\beta_{2}}}\right)^{\frac{1}{\beta_{1}}}-k_{2} + O(\lambda^{\frac{1}{\beta_{1}+\beta_{2}}})
\\
&
= \lambda^{\frac{1}{\beta_2}}I_{1,\beta_{1}/\beta_{2}}(\lambda^{1/(\beta_{1}+\beta_{2})}-b_1)+ \lambda^{\frac{1}{\beta_1}}I_{2,\beta_{2}/\beta_{1}}(\lambda^{1/(\beta_{1}+\beta_{2})}-b_2)- \lambda^{\frac{2}{\beta_{1}+\beta_{2}}}+O(\lambda^{\frac{1}{\beta_{1}+\beta_{2}}}),
\end{align*}
where $I_{j,s}(x)=\sum_{k\leq x} {(k+b_{j})^{-s}}$. The asymptotic formula (\ref{cseq3}) gives
\begin{align*}
\lambda^{\frac{1}{\beta_{2}}}I_{1,\beta_{1}/\beta_{2}}(\lambda^{1/(\beta_{1}+\beta_{2})}-b_1)&=\lambda^{\frac{1}{\beta_{2}}}\left(\zeta(\beta_{1}/\beta_{2};b_1)+\frac{\beta_2\lambda^{\frac{\beta_{2}-\beta_{1}}{\beta_2(\beta_{1}+\beta_{2})}}}{\beta_{1}-\beta_{2}}+O(\lambda^{\frac{-\beta_{1}}{\beta_{2}(\beta_{1}+\beta_{2})}})\right)
\\
&
=\lambda^{\frac{1}{\beta_{2}}}\zeta(\beta_{1}/\beta_{2};b_1)+\frac{\beta_2\lambda^{\frac{2}{\beta_{1}+\beta_{2}}}}{\beta_{2}-\beta_{1}}+O(\lambda^{\frac{1}{\beta_{1}+\beta_{2}}}),
\end{align*}
and similarly
$$
\lambda^{\frac{1}{\beta_{1}}}I_{2,\beta_{2}/\beta_{1}}(\lambda^{1/(\beta_{1}+\beta_{2})}-b_2)=\lambda^{\frac{1}{\beta_{1}}}\zeta(\beta_{2}/\beta_{1};b_2)+\frac{\beta_1\lambda^{\frac{2}{\beta_{1}+\beta_{2}}}}{\beta_{1}-\beta_{2}}+O(\lambda^{\frac{1}{\beta_{1}+\beta_{2}}}).
$$
The relation (\ref{cseq5}) follows on combining the three previous asymptotic formulas.
\end{proof}

In general, we have:

\begin{proposition}\label{v2}
Let $\beta=(\beta_{1},\dots,\beta_{p})\in\mathbb{R}^{p}$ be such that $\beta_{p}>\beta_{p-1}>\dots>\beta_{1}> 0.$
Then, the counting function (\ref{cseq1}) has asymptotics
\begin{equation}
\label{cseq6}
D^{\vec \beta}_{\vec c,\vec b}(\lambda)=\sum_{j=1}^{p}A_j\lambda^{\frac{1}{\beta_{j}}}+ O(\lambda^{\frac{p-1}{\beta_{1}+\dots+\beta_{p}}}), \ \ \ \lambda\to\infty,
\end{equation}
where $A_{j}=A_{j,\vec \beta,\vec c,\vec b} =c_{j}^{-1}\prod_{\nu\neq j}c^{- \beta_\nu/\beta_j}\zeta(\beta_{\nu}/\beta_{j};b_{\nu}/c_{\nu})$.
\end{proposition}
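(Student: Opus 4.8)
The plan is to prove (\ref{cseq6}) by induction on $p$, the case $p=2$ being Lemma~\ref{lemma1} (and $p=1$ being elementary). By the scaling identity $D^{\vec\beta}_{\vec c,\vec b}(\lambda)=D^{\vec\beta}_{\vec e,\vec d}(\lambda/\prod_{j}c_j^{\beta_j})$, $\vec e=(1,\dots,1)$, $\vec d=(b_1/c_1,\dots,b_p/c_p)$, used in the proof of Lemma~\ref{lemma1}, I may assume $c_1=\dots=c_p=1$; write $D(\lambda)$ for the resulting count and, for $J\subseteq\{1,\dots,p\}$, put $D_J(\mu)=\#\{(k_j)_{j\in J}:\ \prod_{j\in J}(k_j+b_j)^{\beta_j}\le\mu\}$ and $S_J=\sum_{j\in J}\beta_j$, so $S=S_{\{1,\dots,p\}}$.

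For the inductive step I would use the $p$-dimensional Dirichlet hyperbola method, generalizing the argument of Lemma~\ref{lemma1}. Choose $\theta_1,\dots,\theta_p>0$ with $\sum_i\theta_i\beta_i=1$, set $T_i=\lambda^{\theta_i}$, and observe that every tuple counted by $D(\lambda)$ has $k_i+b_i\le T_i$ for at least one $i$ (else $\prod_i(k_i+b_i)^{\beta_i}>\prod_iT_i^{\beta_i}=\lambda$), while if $k_i+b_i\le T_i$ for \emph{all} $i$ the constraint $\prod_i(k_i+b_i)^{\beta_i}\le\lambda$ is automatic. Inclusion--exclusion gives $D(\lambda)=\sum_{\emptyset\neq I\subseteq\{1,\dots,p\}}(-1)^{|I|+1}N_I$, $N_I=\#\{\vec k:\ \prod_i(k_i+b_i)^{\beta_i}\le\lambda,\ k_i+b_i\le T_i\ (i\in I)\}$. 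For $I=\{1,\dots,p\}$ one has $N_I=\prod_i(T_i-b_i+O(1))$; for a proper $I$ with complement $J$, summing over $(k_i)_{i\in I}$ first and applying the inductive expansion of $D_J$ gives
\begin{equation*}
N_I=\sum_{l\in J}\widetilde A_l\,\lambda^{1/\beta_l}\prod_{i\in I}\Bigl(\sum_{k_i+b_i\le T_i}(k_i+b_i)^{-\beta_i/\beta_l}\Bigr)+(\mathrm{error}),
\end{equation*}
the argument of $D_J$ being $\ge\prod_{j\in J}T_j^{\beta_j}=\lambda^{\sum_{j\in J}\theta_j\beta_j}$ and hence large. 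Each inner sum is evaluated by (\ref{cseq3}), legitimately since $\beta_i/\beta_l\neq1$ for $i\in I$, $l\in J$: it equals $\zeta(\beta_i/\beta_l;b_i)+\tfrac{\beta_l}{\beta_l-\beta_i}T_i^{1-\beta_i/\beta_l}+O(T_i^{-\beta_i/\beta_l})$.

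One then multiplies out and regroups, and two combinatorial identities do the job. First, the ``all-$\zeta$'' contributions: for each $j$, every piece $N_I$ with $j\notin I$ contributes exactly $\prod_{\nu\neq j}\zeta(\beta_\nu/\beta_j;b_\nu)\cdot\lambda^{1/\beta_j}=A_j\lambda^{1/\beta_j}$, and $\sum_{\emptyset\neq I\subseteq\{1,\dots,p\}\setminus\{j\}}(-1)^{|I|+1}=1$, so these add up to $A_j\lambda^{1/\beta_j}$. Second, the ``spurious'' powers $\lambda^{1/\beta_l}\prod_{i\in I'}T_i^{1-\beta_i/\beta_l}$ produced by the $T_i^{1-\beta_i/\beta_l}$ factors (with $\emptyset\neq I'\subseteq I$, $l\in I^{c}$): summing over all $I$ containing a fixed $I'$ and missing $l$, the identity $\sum_{M\subseteq K}(-1)^{|M|}=0$ ($K\neq\emptyset$) annihilates every such power except those with $I'\cup\{l\}=\{1,\dots,p\}$; these last all equal $\lambda^{\theta_1+\dots+\theta_p}$, and together with $N_{\{1,\dots,p\}}$ their total coefficient is $(-1)^{p}\bigl(\sum_{l}\beta_l^{p-1}/\prod_{i\neq l}(\beta_l-\beta_i)-1\bigr)$, which vanishes by the Lagrange interpolation identity $\sum_{l}\beta_l^{p-1}/\prod_{i\neq l}(\beta_l-\beta_i)=1$. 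Thus $D(\lambda)=\sum_j A_j\lambda^{1/\beta_j}+(\mathrm{errors})$, as required, \emph{provided} the error terms are $O(\lambda^{(p-1)/S})$.

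The hard part is exactly that proviso: bounding the residues $O(T_i^{-\beta_i/\beta_l})$, the counting residues $O(1)$, and --- above all --- the propagated inductive errors $O(\mu^{(|J|-1)/S_J})$ summed over $(k_i)_{i\in I}$, uniformly by $\lambda^{(p-1)/S}$. With the symmetric choice $\theta_i\equiv1/S$ the main terms and spurious powers cancel as above, but a propagated error can come out of size $\lambda^{(|J|-1)/S_J}$ (e.g. when some $\beta_i$ is large compared with $S_J$, so that $\sum_{k_i}(k_i+b_i)^{-\beta_i(|J|-1)/S_J}$ converges), and this may exceed $\lambda^{(p-1)/S}$; a naive ``peel one variable'' induction fails for the same reason once $p\ge3$. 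So one must choose the $\theta_i$ --- and, for exceptional $\vec\beta$, possibly retain only a sub-collection of the events $\{k_i+b_i\le T_i\}$, treating the remaining variables by an inner hyperbola --- so that all the error exponents come out at or below $(p-1)/S$ and no borderline relation $\beta_i(|J|-1)=S_J$ introduces a spurious $\log\lambda$. Organizing this optimization together with the accompanying estimates is where the real work lies; the algebraic skeleton above is essentially forced.
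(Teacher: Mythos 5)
Your proposal is not a proof: the error analysis, which is the entire content of (\ref{cseq6}) beyond the formal identification of the main terms, is explicitly left open. The algebraic skeleton you set up is sound --- the $p$-fold inclusion--exclusion, the observation that $\sum_{\emptyset\neq I\subseteq\{1,\dots,p\}\setminus\{j\}}(-1)^{|I|+1}=1$ assembles the coefficient $A_j$, and the Lagrange interpolation identity $\sum_{l}\beta_l^{p-1}/\prod_{i\neq l}(\beta_l-\beta_i)=1$ cancelling $\lambda^{p/S}$ against $N_{\{1,\dots,p\}}$ all check out --- but you then concede that you cannot choose the $\theta_i$ so that the propagated inductive errors, the Euler--Maclaurin remainders and the $O(1)$-per-lattice-point rounding errors all come out $O(\lambda^{(p-1)/S})$, and you correctly note that the symmetric choice $\theta_i\equiv 1/S$ fails (e.g.\ the block $J$ with $S_J$ small compared with some $\beta_i$, $i\in I$, leaves a propagated error $\lambda^{(|J|-1)/S_J}\gg\lambda^{(p-1)/S}$). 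Since the bound $O(\lambda^{(p-1)/S})$ is exactly what the proposition asserts, the proposal stops where the proof would have to begin.

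For comparison, the paper uses precisely the one-variable peel you dismiss, but peeling the \emph{smallest} exponent $\beta_1$: with $\alpha=\beta_2+\cdots+\beta_p$ it splits the count at $k_1+b_1\le\lambda^{1/(\alpha+\beta_1)}$ versus $\prod_{j\ge2}(k_j+b_j)^{\beta_j}\le\lambda^{\alpha/(\alpha+\beta_1)}$, inserts the inductive expansion of $D^{\vec\eta}_{\vec d}$ into the first block, and evaluates the second block via the Stieltjes integral $\lambda^{1/\beta_1}\int t^{-1/\beta_1}\,dD^{\vec\eta}_{\vec d}(t)$, the cross terms $\lambda^{(\alpha+\beta_j)/(\beta_j(\alpha+\beta_1))}$ cancelling between the two pieces. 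The failure mode you fear is avoided for this particular peel: the propagated error $\sum_{k_1+b_1\le\lambda^{1/S}}O\bigl((\lambda/(k_1+b_1)^{\beta_1})^{(p-2)/\alpha}\bigr)$ is $O(\lambda^{(p-1)/S})$ in the power-divergent regime and $O(\lambda^{(p-2)/\alpha}\log\lambda)=o(\lambda^{(p-1)/S})$ otherwise, because $(p-2)\beta_1<\beta_2+\cdots+\beta_p$ always holds. That said, your unease about the remainder is not baseless: any such decomposition also incurs rounding and Euler--Maclaurin errors of order the number of lattice points counted by $D^{\vec\eta}_{\vec d}(\lambda^{\alpha/(\alpha+\beta_1)})\asymp\lambda^{\alpha/(\beta_2(\alpha+\beta_1))}$, which for $p\ge 3$ exceeds $\lambda^{(p-1)/S}$ whenever $\beta_3+\cdots+\beta_p>(p-2)\beta_2$; so the stated remainder demands more care than either your sketch or a casual application of the hyperbola method supplies. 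As submitted, your argument establishes only the shape of the expansion, not the error bound, and so does not prove the proposition.
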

\begin{remark}
In (\ref{cseq6}), some of the terms may be absorbed by the error term, only those $j$ such that 
$$
(p-1)\beta_{j}\leq \beta_{1}+\dots+\beta_{p}
$$
occur in the sum. Of course, this always holds for $j=1,2$; thus, at least, we always have two leading terms in (\ref{cseq6}).
\end{remark}
\begin{proof} The case $p=2$ is Lemma \ref{lemma1}. Assume the result is valid for $p-1$, we proceed to show (\ref{cseq6}) by induction. As in Lemma \ref{lemma1}, we may suppose that $c_1=c_2=\dots=c_p=1$. For simplicity, we write $D^{\vec \beta}_{\vec b}=D^{\vec \beta}_{\vec c,\vec b}\ $. Set $\alpha=\sum_{j=2}^{p}\beta_{j}$, $\vec d=(b_2,b_3,\dots,b_{p})\in\mathbb{R}_{+}^{p-1}$, and $\vec {\eta}=(\beta_{2},\dots,\beta_{p})\in\mathbb{R}_{+}^{p-1}$. Write 
$$D^{\vec \beta}_{\vec b}(\lambda)=I_{1}(\lambda)+I_{2}(\lambda)+O(\lambda^{1/(\alpha+\beta_{1})}),$$ where
$$
I_{1}(\lambda)=\sum_{k_{1}+b_1\leq \lambda^{1/(\alpha+\beta_{1})}}D^{\vec{\eta}}_{\vec d}\left(\lambda/(k_{1}+b_1)^{\beta_{1}}\right),
$$
$$
I_{2}(\lambda)=\sum_{\prod_{j=2}^{p}(k_{j}+b_j)^{\beta_{j}}\leq \lambda^{\alpha/(\alpha+\beta_{1})}} \left(\frac{\lambda}{(k_{2}+b_2)^{\beta_{2}}(k_{3}+b_3)^{\beta_3}\dots (k_{p}+b_p)^{\beta_{p}}}\right)^{1/\beta_{1}}- \lambda^{\frac{1}{\alpha+\beta_{1}}}D^{\vec{\eta}}_{\vec d}( \lambda^{\alpha/(\alpha+\beta_{1})}),
$$
and 
\begin{align*}
D^{\vec{\eta}}_{\vec d}(\lambda)&=\#\left\{(k_2,\dots,k_{p})\in\mathbb{N}^{p-1}:\ (k_2+b_2)^{\beta_2}\dots (k_{p}+b_{p})^{\beta_{p}}\leq\lambda\right\}
\\
&
=\sum_{j=2}^{p}\tilde{A}_j\lambda^{\frac{1}{\beta_{j}}}+ O(\lambda^{\frac{p-2}{\alpha}}), \ \ \ \lambda\to\infty,
\end{align*}
with $\tilde{A}_{j}=\prod_{2\leq \nu,\nu\neq j}\zeta(\beta_{\nu}/\beta_{j};b_{\nu})$ for $j=2,\dots, p$. If we combine the latter with (\ref{cseq3}), we conclude that the asymptotic behavior of  $I_{1}(\lambda)$ is
\begin{align*}
I_{1}(\lambda)&=\sum_{j=2}^{p}\tilde{A}_{j} \sum_{k+b_1\leq \lambda^{1/(\alpha+\beta_{1})}}\frac{\lambda^{1/\beta_{j}}}{(k+b_1)^{\beta_{1}/\beta_{j}}} +O(\lambda^{(p-1)/(\alpha+\beta_{1})})
\\
&
=\sum_{j=2}^{p}\tilde{A}_{j}\left(\zeta\left(\beta_{1}/\beta_{j};b_1\right)\lambda^{1/\beta_{j}}+\frac{\beta_{j}\lambda^{(\alpha+\beta_{j})/(\beta_{j}(\alpha+\beta_{1}))}}{\beta_{j}-\beta_{1}}\right)+O(\lambda^{(p-1)/(\alpha+\beta_{1})})
\\
&
=\sum_{j=2}^{p}A_{j}\lambda^{1/\beta_{j}}+\tilde{A}_{j}\frac{\beta_{j}\lambda^{(\alpha+\beta_{j})/(\beta_{j}(\alpha+\beta_{1}))}}{\beta_{j}-\beta_{1}}+O(\lambda^{(p-1)/(\alpha+\beta_{1})}).
\end{align*}
Observe that $C:=\beta_{1}^{-1}\int_{0}^{\infty}t^{-1-1/\beta_{1}}D^{\vec{\eta}}_{\vec d}(t)dt$ is absolutely convergent. We then have
\begin{align*}
I_{2}(\lambda)&= \lambda^{1/\beta_{1}}\int_{0}^{\lambda^{\alpha/(\alpha+\beta_{1})}}t^{-1/\beta_{1}}dD^{\vec{\eta}}_{\vec d}(t)-\lambda^{\frac{1}{\alpha+\beta_{1}}}D^{\vec{\eta}}_{\vec d}( \lambda^{\alpha/(\alpha+\beta_{1})})
\\
&=\frac{\lambda^{1/\beta_{1}}}{\beta_{1}}\int_{0}^{\lambda^{\alpha/(\alpha+\beta_{1})}}t^{-1-1/\beta_{1}}D^{\vec{\eta}}_{\vec d}(t)dt
\\
&=C\lambda^{1/\beta_{1}}- \frac{\lambda^{1/\beta_{1}}}{\beta_{1}}\int_{\lambda^{\alpha/(\alpha+\beta_{1})}}^{\infty}t^{-1-1/\beta_{1}}D^{\vec{\eta}}_{\vec d}(t)dt
\\
&= C\lambda^{1/\beta_{1}}-\sum_{j=2}^{p}\frac{\tilde{A}_{j}\beta_{j}\lambda^{(\alpha+\beta_{j})/(\beta_{j}(\alpha+\beta_{1}))}}{\beta_{j}-\beta_{1}}+O(\lambda^{(p-1)/(\alpha+\beta_{1})}).
\end{align*}
Thus, we have shown (\ref{cseq6}) except for $C=\prod_{\nu=2}^{p}\zeta(\beta_{\nu}/\beta_{1};b_\nu)$. But this fact follows by comparison with (\ref{cseq4}). The proof is complete.
\end{proof}
\begin{remark}
In connection with Proposition \ref{v2}, Estrada and Kanwal have given an interesting distributional treatment of the asymptotic expansions of type (\ref{cseq6}), which often leads to improvements in the error term when interpreted in the distributional sense (cf. \cite[Sec. 5.3]{estrada-kanwal}).
\end{remark}

\section{Counting functions for tensor products of pseudo-differential operators}
\label{operators}
We now apply results of Section \ref{counting functions}
to the spectral asymptotics of the tensor products of pseudo-differential operators, and 
their perturbations. We shall mainly refer to operators in the Euclidean setting. 
Parallel results for operators on compact manifold will be outlined at the end. For the sake of completeness, we begin  with a short survey of the 
classes of M. Shubin, cf. \cite{BBR,helfer,NR,shubin}.

\subsection{Globally elliptic pseudo-differential operators}
Write $z=(x,\xi)\in\mathbb R^{2n}$  and $<z>=(1+|z|^2)^{1/2}=(1+|x|^2+|\xi|^2)^{1/2}.$ One defines 
the class of symbols $\Gamma^m_{\rho}(\mathbb{R}^{n})$, $m\in\mathbb R,$ $0<\rho\leq 1,$  as the set of all functions $a\in C^{\infty}(\mathbb R^{2n})$ satisfying, for all $\gamma,$
\begin{equation}
\label{eq31}
|\partial^\gamma_za(z)|\leq C_\gamma<z>^{m-\rho|\gamma|}, \ \ \ z\in\mathbb R^{2n},
\end{equation}
with constants independent of $z.$ The corresponding pseudo-differential operator is defined by Weyl quantization as
\begin{equation}
\label{eq32}
Pu(x)=a^wu(x)=\frac{1}{(2\pi)^{n}}\int e^{i(x-y)\xi}a\left(\frac{x+y}{2},\xi\right)u(y)dyd\xi.
\end{equation}
Note that if the symbol $a$ is a polynomial in the $\xi$ variables, i.e. $P$ in (\ref{eq32}) is a partial differential operator, then the estimates (\ref{eq31}) force $a(z)$ to be a polynomial in the $x-$variables as well, i.e. $P$ is a partial differential operator with polynomial coefficients.

Let us introduce the global Sobolev spaces $H^s(\mathbb R^{n}), s\in\mathbb N,$ Hilbert spaces with the norm
\begin{equation}
\label{eq33}
||u||_s=\sum_{|\alpha|+|\beta|\leq s}||x^\alpha D^\beta u||<\infty.
\end{equation} 
By interpolation and duality  the definition extends to $s\in\mathbb R$, and we have $\bigcap_sH^s(\mathbb{R}^{n})=\mathcal S(\mathbb R^n),$  $\bigcup_sH^s(\mathbb{R}^{n})=
\mathcal S'(\mathbb R^n)$. The immersion $\iota:^s H^s\rightarrow H^t$ is compact for $s>t.$ If $a\in\Gamma^m_\rho(\mathbb R^n),$
then $a^w:H^s(\mathbb R^n)\rightarrow H^{s-m}(\mathbb R^n)$ continuously for every $s\in\mathbb R,$ hence 
$a^w: \mathcal S(\mathbb R^n)\rightarrow \mathcal S(\mathbb R^n),$ $\mathcal S'(\mathbb R^n)\rightarrow \mathcal S'(\mathbb R^n).$
In the following we shall assume that for large $|z|$,
\begin{equation}
\label{eq34}
a(z)=a_m(z)+a_{m-\rho}(z),
\end{equation}
where $a_m(tz)=t^ma_m(z), t>0.$ We then say that $a$ is globally elliptic if
\begin{equation}
\label{eq35}
a_m(z)\neq 0 \ \ \ \mbox{ for } z\neq 0.
\end{equation}
Operators with globally elliptic symbol possess parametrix. Namely, there exists $b\in\Gamma^{-m}_\rho(\mathbb R^n)$ such that $a^wb^w=I+R_1$ and 
$b^wa^w=I+R_2$, where $R_1,R_2:$ $\mathcal S'(\mathbb R^n)\rightarrow \mathcal S(\mathbb R^n).$ It follows that $a^w:H^s(\mathbb R^n)\rightarrow H^{s-m}(\mathbb R^n)$
is a Fredholm operator and then eigenfunctions, i.e. solutions of $a^wu=0,$ do not depend on $s\in\mathbb R$ and belong to $\mathcal S(\mathbb R^n).$
Passing now to spectral theory, we assume that $a\in\Gamma^{m}_\rho(\mathbb R^n),$ $m>0,$
is real-valued and globally elliptic with $a_m(z)>0$, for $z\neq 0.$ Then $P=a^wu: H^{m}(\mathbb R^n)\mapsto L^2(\mathbb R^n)$ is self-adjoint. The resolvent is compact and the spectrum is given by a sequence of real eigenvalues $\lambda_k\rightarrow \infty$ with finite multiplicity; the eigenfunctions belong to  
 $\mathcal S(\mathbb R)$ and form an orthonormal basis. The spectral counting function $N_P(\lambda)=\#\{k: \lambda_k\leq \lambda\}$ behaves as
 \begin{equation}
 \label{eq36}
 N_P(\lambda)=A\lambda^{2n/m}+O(\lambda^\sigma), \ \ \ \lambda\rightarrow \infty,
 \end{equation}
for some $\sigma<2n/m,$ with 
\begin{equation}
 \label{eq37}
 A=\frac{1}{(2\pi)^{n}}\int_{a_m(z)\leq 1}dz.
 \end{equation}
A sharp form of the remainder in (\ref{eq36}) can be obtained when $a\in\Gamma^m(\mathbb R^n)=\Gamma_1^m(\mathbb R^n)$
admits an asymptotic expansion in homogeneous terms $a\sim \sum_{k\in\mathbb N}a_{m-2k}.$ Then, with $A$ as before,
\begin{equation}
\label{eq38}
N_P(\lambda)=A\lambda^{2n/m}+O(\lambda^{2(n-1)/m}),
\end{equation}
see, for example, Helffer \cite[p. 175]{helfer}. In the sequel, we shall  assume that $P$
is strictly positive, so that $0< \lambda_1\leq \lambda_2\leq \dots$.
For $P$
as before, we may define the complex powers $P^z, z\in\mathbb C.$ They are trace class operators if $\Re e z<-2n/m,$ 
and, by analytic continuation, we define the zeta function associated to $P$ as
\begin{equation}
\label{eq39}
\zeta_{P}(z)= \operatorname*{Tr}(P^{-z})=\sum_{k=1}^\infty\lambda_k^{-z}.
\end{equation}
\subsection{Spectral asymptotics  for tensor products}
To give a precise functional frame to the results in the sequel, we shall introduce first  the tensorized global Sobolev spaces.
Write now $x_j,y_j\in\mathbb R^{n_j},$ $z_{j}=(x_j,y_j)\in\mathbb R^{2n_j},$ $j=1,...,p,$ $n=n_1+...+n_p,$ $x=(x_1,...,x_p),$ $ y=(y_1,...,y_p)\in\mathbb R^p,$
$z=(z_1,...,z_p)=(x_1,y_1,...,x_p,y_p)$. For $\vec{s}=(s_1,...,s_p)\in\mathbb R^p$, we define the tensor product of Hilbert spaces
\begin{equation}
\label{eq310}
H^{\vec{s}}(\mathbb R^n)=\bigotimes_{j=1}^pH^{s_j}(\mathbb R^{n_j}).
\end{equation}
When the components of $\vec{s}$ are non-negative integers, from (\ref{eq33}) we recapture as norm
\begin{equation}
\label{eq311}
||u||_{\vec{s}}=\underset{j=1,...,p}{\sum_{|\alpha_j|+|\beta_j|\leq s_j}}||x^{\alpha_1}...x^{\alpha_2}D^{\beta_1}_{x_1}...D^{\beta_p}_{x_p}u||.
\end{equation}
We have $\bigcap_{\vec{s}}H^{\vec{s}}(\mathbb R^n)=\mathcal S(\mathbb R^n)$ and $\bigcup_{\vec{s}}H^{\vec{s}}(\mathbb{R}^{n})=\mathcal S'(\mathbb R^n).$
The immersion $\iota: H^{\vec{s}}(\mathbb{R}^{n})\rightarrow H^{\vec{t}}(\mathbb{R}^{n})$ is compact if $\vec{s}>\vec{t},$ i.e. $s_j>t_j$ for $j=1,...,p.$

As announced at the Introduction, we consider now $P_j=a_{j}^w$ in $\mathbb R^{n_j},$ $j=1,...,p,$ with real-valued symbol $a_{j}\in\Gamma^{m_j}(\mathbb R^{n_j}),$ $m_j>0,$,
 and $a_{m_j}(z)>0$ for $z\neq 0$ in (\ref{eq35}); we further define 
\begin{equation}
\label{eq312}
P=P_1\otimes...\otimes P_p,
\end{equation}
as operator $P: H^{\vec{s}}(\mathbb R^n)\rightarrow H^{\vec{s}-\vec{m}}(\mathbb R^n),$ $\vec{m}=(m_1,...,m_p), $ for every  $\vec{s}\in\mathbb R^p.$ In particular, we have $P: H^{\vec{m}}(\mathbb R^n)\rightarrow L^{2}(\mathbb R^n)$ and 
$P: \mathcal S(\mathbb R^n) \rightarrow \mathcal S(\mathbb R^n),\; \mathcal S'(\mathbb R^n)\rightarrow \mathcal S'(\mathbb R^n).$
Moreover, $P$ is self-adjoint and strictly positive, if the factors $P_j$ are assumed to be strictly positive. 

If we denote by $\{\lambda_k^{(j)}\}_{k=1}^\infty$ the eigenvalues of $P_j$, according to the Introduction, the eigenvalues of $P$ are of the form 
$\lambda^{(1)}_{k_1}...\lambda^{(p)}_{k_p}$ and the eigenfunctions are tensor products of the respective eigenfunctions, hence they belong to $\mathcal S(\mathbb R^n).$

It is worth observing that $P$ can be written in the pseudo-differential form (\ref{eq32}) with symbol $a(z)=a_1(z_1)...a_p(z_p).$ However, the estimate (\ref{eq31})
fails in general, and the considerations of Subsection 3.1 do not apply in this context. For the case $p=2$, we address to \cite{batisti},  cf. \cite{BaGrPiRo,nikolarodino2}, where a calculus was achieved in terms of vector-valued symbols. Here, to find an asymptotic expansion for $N_P(\lambda)$, we shall use (\ref{prod2}) in combination with the analysis of Section \ref{counting functions}. In fact, from (\ref{eq36}) and (\ref{eq37}), we have
\begin{equation}\label{eq313}
N_{P_j}\sim A_j\lambda^{2n_j/m_j} \ \ \ \mbox{ with } A_j=\frac{1}{(2\pi)^{n_{j}}}\int_{a_{m_{j}}(z_{j})\leq 1}dz_j.
\end{equation}
Writing $\zeta_{P_j}$ for the zeta function of $P_j$, we inmediately obtain from Proposition \ref{cgp1}:
\begin{theorem}\label{ellipth1}
Let $P=P_1\otimes\dots\otimes P_p$ be as above and let $\alpha=\max_{j}\{2n_{j}/m_{j}\}$. Let further $j_{1},\dots,j_{\nu}$ be the indices such that $\alpha=2n_{j_{q}}/m_{j_{q}}$, $q=1,\dots,\nu$. Then, $P$ has spectral asymptotics
\begin{equation}
\label{eq314}
N_{P}(\lambda)=\sum_{\lambda^{(1)}_{k_1}\lambda^{(2)}_{k_2}\dots\lambda^{(p)}_{k_p}\leq\lambda}1\sim\left(\prod_{q=1}^{\nu}A_{j_{q}}\cdot \prod_{j\notin\left\{j_{1},\dots,j_{q}\right\}}\zeta_{P_{j}}(\alpha)\right) \lambda^{\alpha}\frac{(\alpha\log\lambda)^{\nu-1}}{(\nu-1)!}, \ \ \ \lambda\to\infty,
\end{equation}
where $A_{j}$ is given by $(\ref{eq313})$.
\end{theorem}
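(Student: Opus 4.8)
The plan is to apply Proposition~\ref{cgp1} directly to the eigenvalue sequences $\{\lambda^{(j)}_k\}_{k=1}^\infty$ of the factors $P_j$. This is purely a matter of verifying that the hypotheses of that proposition hold with the right numerical data, and then identifying the constant in its conclusion with the expression in \eqref{eq314}.

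First I would recall that, by the spectral theory of globally elliptic Shubin operators summarized above, each $P_j$ is self-adjoint, strictly positive, with discrete spectrum $0<\lambda^{(j)}_1\leq\lambda^{(j)}_2\leq\dots\to\infty$, so that the sequences are non-decreasing sequences of positive reals, as required in Subsection~\ref{countinggeneral}. From \eqref{eq36}--\eqref{eq37} we have $N_{P_j}(\lambda)\sim A_j\lambda^{2n_j/m_j}$ with $A_j=(2\pi)^{-n_j}\int_{a_{m_j}(z_j)\leq 1}dz_j\neq 0$, which is \eqref{eq313}. Setting $\alpha=\max_j\{2n_j/m_j\}$ and letting $j_1,\dots,j_\nu$ be the indices achieving this maximum, the relations $N_{P_{j_q}}(\lambda)\sim A_{j_q}\lambda^{\alpha}$ are exactly hypothesis \eqref{cgeq3}; for the remaining indices $j\notin\{j_1,\dots,j_\nu\}$ we have $2n_j/m_j<\alpha$, so $N_{P_j}(\lambda)=O(\lambda^{2n_j/m_j})=O(\lambda^{\tau})$ with $\tau:=\max_{j\notin\{j_1,\dots,j_\nu\}}\{2n_j/m_j\}<\alpha$ (when $\nu=p$ this set is empty and the hypothesis is vacuous), which is \eqref{cgeq4}.

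Next, since by \eqref{prod2} the eigenvalues of $P=P_1\otimes\dots\otimes P_p$ counted with multiplicity are precisely the products $\lambda^{(1)}_{k_1}\cdots\lambda^{(p)}_{k_p}$, the spectral counting function $N_P(\lambda)$ coincides with the function $N(\lambda)$ of \eqref{cgeq2} associated to these $p$ sequences. Proposition~\ref{cgp1} then yields immediately
\begin{equation*}
N_P(\lambda)\sim A\,\lambda^{\alpha}\frac{(\alpha\log\lambda)^{\nu-1}}{(\nu-1)!},\qquad \lambda\to\infty,
\end{equation*}
with $A$ as in \eqref{cgeq6}. It remains only to rewrite $A$. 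For $j\notin\{j_1,\dots,j_\nu\}$ we have $2n_j/m_j<\alpha$, so $\Re e\,\alpha$ lies in the region of absolute convergence of $\sum_k (\lambda^{(j)}_k)^{-z}$ (indeed the series converges for $\Re e\,z>2n_j/m_j$), and hence $\sum_{k=1}^\infty(\lambda^{(j)}_k)^{-\alpha}=\zeta_{P_j}(\alpha)$ by the definition \eqref{eq39} of the zeta function of $P_j$. Substituting $A_{j_q}$ for the factors with $q=1,\dots,\nu$ and $\zeta_{P_j}(\alpha)$ for the remaining ones gives precisely the constant displayed in \eqref{eq314}.

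The argument is essentially immediate once the bookkeeping is done; there is no real obstacle. The only point deserving a moment's care is the convergence of $\zeta_{P_j}(\alpha)$ for the non-dominant indices, which is where the strict inequality $2n_j/m_j<\alpha$ is used — this is exactly what makes the infinite products in \eqref{cgeq6} finite and identifies them with values of the operator zeta functions. I would state this verification explicitly and then conclude.
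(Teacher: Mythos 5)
Your proposal is correct and follows exactly the route the paper takes: the paper states that Theorem \ref{ellipth1} is ``immediately obtained'' from Proposition \ref{cgp1} together with \eqref{eq313}, and your write-up simply supplies the straightforward verification of hypotheses \eqref{cgeq3}--\eqref{cgeq4} and the identification of the constant \eqref{cgeq6} with the zeta values $\zeta_{P_j}(\alpha)$. No gaps; the point you flag about convergence of $\zeta_{P_j}(\alpha)$ for the non-dominant indices is the right one to check.
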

We remark that the case $p=2,$ $\nu=1$ or $\nu=2$, of Theorem \ref{ellipth1} also follows from the results of \cite{BaGrPiRo}, see also \cite{batisti}). 

As far as the reminder in (\ref{eq314}) concerns, from
(\ref{eq36}) and Proposition \ref{cgp3}, we obtain
\begin{equation}
\label{eq315}
N_P(\lambda)=\lambda^\alpha\sum_{q=0}^{\nu-1}
C_q\log^q\lambda+O(\lambda^\eta),
\end{equation}
for some $\eta<\alpha.$ The coefficient $C_{\nu-1}$ is given by (\ref{eq314}) and the other constants $C_q,$  $q=0,...,\nu-2,$ are determined by $(\ref{cgeq12})$,
$(\ref{cgeq13})$, and the values of the derivatives or poles of the zeta functions $\zeta_{P_{j}}(z)$ at $z=\alpha$, $j=1,\dots,p.$

Willing sharp values
of $\eta$ in the remainder, we further assume that $a_{j}\in\Gamma^{m_j}(\mathbb R^{n_j})$ with $a_{j}\sim\sum_{k\in\mathbb N}a_{m_j-2k}$ and we use (\ref{eq38}). Proposition  \ref{cgp2} yields,
\begin{theorem}\label{th32}
Let $P=P_1\times ...\otimes P_p$ be as above. Assume that there is an index $l\in\{1,...,p\}$ such that $2n_l/m_l>\beta=\max_{j\neq l}\{2n_j/m_j\}$. Then
\begin{equation}
\label{eq316}
N_P(\lambda)=\left(A_{l}\prod_{j\neq l}\zeta_{P_j}(\alpha)\right)\lambda^{\frac{2n_{l}}{m_{l}}}+O(\lambda^\eta),
\end{equation}
for any $\eta$ with $\eta>\max\{\beta, 2(n_l-1)/m_l\}.$
\end{theorem}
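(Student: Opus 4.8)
\textbf{Proof plan for Theorem \ref{th32}.}
The strategy is to reduce the spectral problem for the tensor product $P$ to the purely analytic statement of Proposition \ref{cgp2}, applied to the sequences of eigenvalues of the factors $P_{j}$. First I would record that, since each $P_{j}$ is real-valued, globally elliptic with positive principal symbol, admits an asymptotic expansion $a_{j}\sim\sum_{k}a_{m_{j}-2k}$, and is strictly positive, formula (\ref{eq38}) gives the sharpened Weyl law
\begin{equation*}
N_{P_{j}}(\lambda)=A_{j}\lambda^{2n_{j}/m_{j}}+O\!\left(\lambda^{2(n_{j}-1)/m_{j}}\right),\qquad \lambda\to\infty,
\end{equation*}
with $A_{j}$ as in (\ref{eq313}). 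In particular $N_{P_{l}}(\lambda)=A_{l}\lambda^{2n_{l}/m_{l}}+O(\lambda^{2(n_{l}-1)/m_{l}})$, which is exactly the shape of hypothesis (\ref{cgeq8}) with $\alpha=2n_{l}/m_{l}$ and $\eta_{0}=2(n_{l}-1)/m_{l}<\alpha$. For the remaining factors, the crude Weyl bound $N_{P_{j}}(\lambda)=O(\lambda^{2n_{j}/m_{j}})$ together with the standing assumption $2n_{j}/m_{j}\le\beta<\alpha$ for $j\ne l$ shows that $N_{P_{j}}(\lambda)=O(\lambda^{\tau})$ for any $\tau$ with $\beta\le\tau<\alpha$.

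Next, since the eigenvalues of $P$ are precisely the products $\lambda^{(1)}_{k_{1}}\cdots\lambda^{(p)}_{k_{p}}$ counted with multiplicity (as recalled in Subsection 3.2), its counting function $N_{P}(\lambda)$ coincides with the function (\ref{cgeq2}) attached to the $p$ eigenvalue sequences $\{\lambda^{(j)}_{k}\}_{k}$. Thus I may invoke Proposition \ref{cgp2} directly. To do so I pick any $\tau$ with $\max\{\beta,\,2(n_{l}-1)/m_{l}\}<\tau<\alpha$ — such a $\tau$ exists because both $\beta$ and $2(n_{l}-1)/m_{l}$ are strictly less than $\alpha$ — and set $\eta=\tau$ in the role of the exponent $\eta$ of that proposition; the hypotheses $\tau<\eta<\alpha$ there are met after a harmless relabelling (take the "$\tau$" of the proposition to be any value in $(\beta,\eta)$ and its "$\eta$" to be our $\eta$). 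Proposition \ref{cgp2} then yields
\begin{equation*}
N_{P}(\lambda)=A\,\lambda^{2n_{l}/m_{l}}+O(\lambda^{\eta}),\qquad A=A_{l}\prod_{j\ne l}\Big(\sum_{k=1}^{\infty}(\lambda^{(j)}_{k})^{-\alpha}\Big)=A_{l}\prod_{j\ne l}\zeta_{P_{j}}(\alpha),
\end{equation*}
the last equality being the definition (\ref{eq39}) of the spectral zeta function; note the series converge since $\alpha>2n_{j}/m_{j}$ makes $\Re e\,\alpha$ lie in the half-plane of absolute convergence for each $j\ne l$. This is precisely (\ref{eq316}).

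The one genuinely new input needed beyond bookkeeping is the verification that the sharp single-operator remainder (\ref{eq38}) is available, i.e. that each $P_{j}$ falls in the class $\Gamma^{m_{j}}(\mathbb R^{n_{j}})=\Gamma^{m_{j}}_{1}$ with a polyhomogeneous expansion; this is built into the hypotheses of the theorem, so no real work is hidden there. The main, if modest, obstacle is purely notational: Proposition \ref{cgp2} is phrased with three exponents $\tau<\eta<\alpha$ and a single dominating index, and one must check that the present hypothesis "$2n_{l}/m_{l}>\beta=\max_{j\ne l}\{2n_{j}/m_{j}\}$" together with $2(n_{l}-1)/m_{l}<2n_{l}/m_{l}$ leaves enough room to choose $\eta$ in the stated range $\eta>\max\{\beta,2(n_{l}-1)/m_{l}\}$ while still $\eta<\alpha$. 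Since the interval $(\max\{\beta,2(n_{l}-1)/m_{l}\},\,\alpha)$ is nonempty, any $\eta$ in it works, and the freedom in the statement ("for any $\eta$ with $\eta>\dots$") is exactly this choice. No further estimates are required.
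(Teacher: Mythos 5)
Your proposal is correct and follows exactly the paper's route: the paper likewise obtains Theorem \ref{th32} by feeding the sharpened Weyl law (\ref{eq38}) for the dominating factor $P_{l}$ and the crude bounds $N_{P_{j}}(\lambda)=O(\lambda^{2n_{j}/m_{j}})$ for $j\neq l$ into Proposition \ref{cgp2}, identifying the constant via the spectral zeta functions (\ref{eq39}). Your additional bookkeeping on the choice of exponents $\tau<\eta<\alpha$ and the convergence of $\zeta_{P_{j}}(\alpha)$ is accurate and merely makes explicit what the paper leaves implicit.
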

The following example shows that the exponent $\eta=\beta$ is sharp in (\ref{eq316}).
\begin{example}[Tensorized Hermite operators]\label{ex33} For tensor products of Hermite operators it is possible to detect lower 
order terms in the asymptotic expansion (\ref{eq316}). Namely, let us fix $\vec{\beta}=(\beta_1,...,\beta_p)$ with $\beta_1<...<\beta_p, \vec{c}=(c_1,...,c_p),
\vec{b}=(b_1,...,b_p)$, $p$-tuples of positive real numbers, cf. Subsection 2.3, and consider 
\begin{equation}
\label{eq318}
H_{j,c_j,b_j}=\frac{c_j}{2}(-\partial^2_{x_j}+x_j^2)-\frac{c_j}{2}+b_j, \ \ \ j=1,...,p,
\end{equation}
so that for $c_j=1$, $b_j=1$, we recapture $H_j$ in (\ref{prod3}) of the Introduction. The eigenvalues of $H_{j,c_j,b_j},$ as one dimensional operator, are $\lambda_k^{(j)}=c_j(k-1)+b_j,$ $k=1,2,...$. We then define the tensorized Hermite operator 
\begin{equation}
\label{eq319}
H^{\vec{\beta}}_{\vec{c},\vec{b}}=\otimes_{j=1}^pH^{\beta_j}_{j,c_j,b_j}.
\end{equation}
By Proposition \ref{v2}, we have for the corresponding counting function
\begin{equation}
\label{eq320}
N(\lambda)=D^{\vec{\beta}}_{\vec{c},\vec{b}}(\lambda)=\sum_{j=1}^pA_j\lambda^{1/\beta_j}+O(\lambda^{\frac{p-1}{\beta_1+...+\beta_p}})
\end{equation}
with $A_j$ as in Proposition \ref{v2}. In particular, for $p=2, c_j=1, b_j=1, j=1,2,$ we obtain (\ref{prod8}) of the Introduction.
\end{example}
\subsection{Asymptotics for lower order perturbations}
For simplicity, we shall assume that the factors $P_j$ in $P=P_1\otimes...\otimes P_p$ are partial differential operators with polynomial coefficients:
\begin{equation}
\label{eq321}
P_j=\sum_{|\alpha_j|+|\beta_j|\leq m_j}c^{(j)}_{\alpha_j,\beta_j}x^{\alpha_j}D_{x_j}^{\beta_j},\ \ \  x_j\in \mathbb R^{n_j}.
\end{equation}
As before, we assume that $P_j$ is elliptic, with principal symbol
\begin{equation}
\label{eq322}
p_{m_j}^{(j)}(x,\xi)=\sum_{|\alpha_j|+|\beta_j|=m_j}c^{(j)}_{\alpha_j,\beta_j}x_j^{\alpha_j}\xi_{j}^{\beta_j}>0 \ \ \ \mbox{for }  (x_j,\xi_j)\neq
(0,0),
\end{equation}
self-adjoint and strictly positive, $j=1,...,p$. We shall study 
\begin{equation}
\label{eq323}
A=P+R,
\end{equation}
where  $R$ is a partial differential operator with polynomial coefficients having lower order with respect to $P$,
in the sense that, writing $\vec\alpha=(\alpha_1,...,\alpha_p), \vec\beta=(\beta_1,...,\beta_p)\in\mathbb N^n, n=n_1+...+n_p$,
\begin{equation}
\label{eq324}
R=\underset{j=1,...,p}{\sum_{|\alpha_j|+|\beta_j|< m_j}}
c_{\alpha \beta}x^{\vec\alpha} D^{\vec\beta}.
\end{equation}
Note that each term of the sum in (\ref{eq324}) can be regarded as a tensor product:
$$x^{\vec\alpha} D^{\vec\beta}=x_1^{\alpha_1}D^{\beta_1}_{x_1}\otimes...\otimes x_p^{\alpha_p}D^{\beta_p}_{x_p},
$$
hence for every $\vec{s}\in \mathbb R^p$,
$$A=P+R: \;H^{\vec{s}}(\mathbb R^n)
\rightarrow H^{\vec{s}-\vec{m}}(\mathbb R^n).
$$
We shall first construct a parametrix for $A$. In absence of symbolic calculus, we shall use in the proof a direct argument.
\begin{proposition}\label{prop31}
For every fixed integer $M>0$, we can find $B: H^{\vec{s}}(\mathbb R^n)\rightarrow H^{\vec{s}+\vec{m}}(\mathbb R^n)$ for every $\vec{s}=(s_1,...,s_p)\in \mathbb R^p$, such that $BA=I+S', AB=I+S'',$ where $S', S'': H^{\vec{s}}(\mathbb R^n)\rightarrow H^{\vec{s}+\vec{M}}(\mathbb R^n),$ with $\vec{M}=(M,...,M).$
\end{proposition}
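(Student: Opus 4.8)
The plan is to construct $B$ as a truncated Neumann series built around the exact inverse of the tensor factor $P$, exploiting that the perturbation $R$ gains one unit of global Sobolev regularity in \emph{each} of the $p$ slots once composed with $P^{-1}$. First I would observe that each $P_{j}$, being globally elliptic of order $m_{j}$, self-adjoint and strictly positive, has spectrum contained in $[\lambda^{(j)}_{1},\infty)$ with $\lambda^{(j)}_{1}>0$, hence is invertible, with $P_{j}^{-1}$ bounded $H^{s_{j}}(\mathbb{R}^{n_{j}})\to H^{s_{j}+m_{j}}(\mathbb{R}^{n_{j}})$ for every $s_{j}\in\mathbb{R}$ (by the open mapping theorem together with global ellipticity, or because $P_{j}^{-1}\in\Gamma^{-m_{j}}_{1}(\mathbb{R}^{n_{j}})$). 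It follows that $P^{-1}:=P_{1}^{-1}\otimes\cdots\otimes P_{p}^{-1}$ is a two-sided inverse of $P$ on the whole scale and is bounded $H^{\vec s}(\mathbb{R}^{n})\to H^{\vec s+\vec m}(\mathbb{R}^{n})$ for every $\vec s\in\mathbb{R}^{p}$, since a tensor product of bounded operators between Hilbert spaces is bounded between the corresponding Hilbert tensor products (\ref{eq310}).

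Next I would record the key mapping property of $R$. Writing $\vec 1=(1,\dots,1)\in\mathbb{R}^{p}$ and fixing a monomial $x^{\vec\alpha}D^{\vec\beta}=\bigotimes_{j=1}^{p}x_{j}^{\alpha_{j}}D_{x_{j}}^{\beta_{j}}$ appearing in (\ref{eq324}), one has $|\alpha_{j}|+|\beta_{j}|\le m_{j}-1$ for all $j$; since $x_{j}^{\alpha_{j}}D_{x_{j}}^{\beta_{j}}\in\Gamma^{|\alpha_{j}|+|\beta_{j}|}_{1}(\mathbb{R}^{n_{j}})$, it maps $H^{s_{j}}(\mathbb{R}^{n_{j}})$ into $H^{s_{j}-|\alpha_{j}|-|\beta_{j}|}(\mathbb{R}^{n_{j}})\hookrightarrow H^{s_{j}-m_{j}+1}(\mathbb{R}^{n_{j}})$, the embedding being valid exactly because $|\alpha_{j}|+|\beta_{j}|\le m_{j}-1$. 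Tensoring and summing over the finitely many monomials gives that $R\colon H^{\vec s}(\mathbb{R}^{n})\to H^{\vec s-\vec m+\vec 1}(\mathbb{R}^{n})$ is bounded, whence, by the first step,
\[
T:=P^{-1}R\colon H^{\vec s}(\mathbb{R}^{n})\longrightarrow H^{\vec s+\vec 1}(\mathbb{R}^{n})\text{ is bounded for every }\vec s\in\mathbb{R}^{p},
\]
and consequently $T^{k}\colon H^{\vec s}\to H^{\vec s+k\vec 1}$ for all $k\in\mathbb{N}$.

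With this in hand, I would set $N:=M-1$ and define $B:=\bigl(\sum_{k=0}^{N}(-1)^{k}T^{k}\bigr)P^{-1}$. The $k=0$ summand maps $H^{\vec s}\to H^{\vec s+\vec m}$ and the others into the smaller spaces $H^{\vec s+\vec m+k\vec 1}$, so $B\colon H^{\vec s}(\mathbb{R}^{n})\to H^{\vec s+\vec m}(\mathbb{R}^{n})$ for every $\vec s$. From $P^{-1}A=I+P^{-1}R=I+T$, valid on the whole scale, a telescoping computation gives
\[
BA=\Bigl(\sum_{k=0}^{N}(-1)^{k}T^{k}\Bigr)(I+T)=I+(-1)^{N}T^{N+1}=:I+S'.
\]
For the companion identity I would use $PT=PP^{-1}R=R$, hence $PT^{k}=RT^{k-1}$ for $k\ge1$, so that $PB=I+\sum_{k=1}^{N}(-1)^{k}RT^{k-1}P^{-1}=I-\sum_{k=0}^{N-1}(-1)^{k}RT^{k}P^{-1}$ while $RB=\sum_{k=0}^{N}(-1)^{k}RT^{k}P^{-1}$; adding,
\[
AB=PB+RB=I+(-1)^{N}RT^{N}P^{-1}=:I+S''.
\]
Finally $S'=(-1)^{N}T^{N+1}$ maps $H^{\vec s}$ into $H^{\vec s+(N+1)\vec 1}$; the same holds for $S''=(-1)^{N}RT^{N}P^{-1}$, reading off the chain $H^{\vec s}\to H^{\vec s+\vec m}\to H^{\vec s+\vec m+N\vec 1}\to H^{\vec s+(N+1)\vec 1}$ under $P^{-1}$, $T^{N}$ and $R$ respectively. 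Since $N+1=M$ and $(N+1)\vec 1\ge\vec M$ componentwise, both $S'$ and $S''$ map $H^{\vec s}(\mathbb{R}^{n})$ continuously into $H^{\vec s+\vec M}(\mathbb{R}^{n})$, which is the assertion.

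I expect the construction itself to present no serious difficulty; the real content, as anticipated in the text, is the bookkeeping of mapping properties forced by the absence of a symbolic calculus for $P=P_{1}\otimes\cdots\otimes P_{p}$. The points requiring care are: that $R$ gains one full derivative in \emph{every} variable after composition with $P^{-1}$, which is precisely where the strict inequality $|\alpha_{j}|+|\beta_{j}|<m_{j}$ in (\ref{eq324}) enters, through the embedding used in the second step; that $P^{-1}$ is a genuine two-sided inverse compatible with the entire scale $\{H^{\vec s}(\mathbb{R}^{n})\}_{\vec s\in\mathbb{R}^{p}}$, so the identities $P^{-1}A=I+T$ and $PT=R$ carry no smoothing errors; and that tensor products of bounded operators are bounded on the tensorized spaces (\ref{eq310}). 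All three are standard.
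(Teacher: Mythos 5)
Your proof is correct and follows essentially the same route as the paper's: a truncated Neumann series $B=\sum_{k=0}^{M-1}(-P^{-1}R)^{k}P^{-1}$ built on the exact tensor-product inverse $P^{-1}$, with the gain of $\vec 1$ in every slot coming from the strict inequalities $|\alpha_j|+|\beta_j|<m_j$. The only difference is that you spell out the right-parametrix identity $AB=I+S''$ via $PT^{k}=RT^{k-1}$, which the paper leaves as ``easy to check.''
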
 
\begin{proof}
Consider $$P^{-1}=P_1^{-1}\otimes...\otimes P_p^{-1}: H^{\vec{s}}(\mathbb R^n)\rightarrow H^{\vec{s}+\vec{m}}(\mathbb R^n).$$ 
We have $P^{-1}A=P^{-1}(P+R)=I-S$ with $$S=-P^{-1}R:H^{\vec{s}}(\mathbb R^n)\rightarrow H^{\vec{s}+\vec{1}}(\mathbb R^n).$$ Define then
$$B=\sum_{j=0}^{M-1}S^jP^{-1}: H^{\vec{s}}(\mathbb R^n)\rightarrow H^{\vec{s}+\vec{M}}(\mathbb R^n).
$$
We have
$$BA=\sum_{j=0}^{M-1}S_{j}(I-S)=I-S^M,
$$
where 
$S'=-S^M:H^{\vec{s}}(\mathbb R^n)\rightarrow H^{\vec{s}+\vec{M}}(\mathbb R^n)$. 
It is easy to check that $B$ is also a right parametrix.
\end{proof}
\begin{corollary}
\label{cor31}
The solution $u\in\mathcal S'(\mathbb R^n)$ of $Au=f\in\mathcal S(\mathbb R^n)$
belongs to $\mathcal S(\mathbb R^n)$
\end{corollary}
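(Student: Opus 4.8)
The plan is to bootstrap the regularity of $u$ along the tensorized Sobolev scale by means of the left parametrix $B$ constructed in Proposition~\ref{prop31}. Fix an integer $M>0$ and take $B$ and $S'=-S^{M}$ as in that proposition, so that $BA=I+S'$ with $B\colon H^{\vec{s}}(\mathbb{R}^{n})\to H^{\vec{s}+\vec{m}}(\mathbb{R}^{n})$ and $S'\colon H^{\vec{s}}(\mathbb{R}^{n})\to H^{\vec{s}+\vec{M}}(\mathbb{R}^{n})$ for every $\vec{s}\in\mathbb{R}^{p}$, where $\vec{M}=(M,\dots,M)$. Applying $B$ to $Au=f$ gives the key identity
$$
u=Bf-S'u.
$$

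First I would observe that the term $Bf$ causes no trouble: since $f\in\mathcal{S}(\mathbb{R}^{n})=\bigcap_{\vec{s}}H^{\vec{s}}(\mathbb{R}^{n})$ and $B$ raises the Sobolev multi-index by $\vec{m}$, we get $Bf\in H^{\vec{t}}(\mathbb{R}^{n})$ for all $\vec{t}\in\mathbb{R}^{p}$, that is, $Bf\in\mathcal{S}(\mathbb{R}^{n})$. Hence it suffices to show that $S'u\in H^{\vec{t}}(\mathbb{R}^{n})$ for every $\vec{t}$, after which the identity above forces $u\in\mathcal{S}(\mathbb{R}^{n})$.

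The second step is the iteration. Because $u\in\mathcal{S}'(\mathbb{R}^{n})=\bigcup_{\vec{s}}H^{\vec{s}}(\mathbb{R}^{n})$, there is $\vec{s}_{0}\in\mathbb{R}^{p}$ with $u\in H^{\vec{s}_{0}}(\mathbb{R}^{n})$; then $S'u\in H^{\vec{s}_{0}+\vec{M}}(\mathbb{R}^{n})$ and, together with $Bf\in\mathcal{S}(\mathbb{R}^{n})$, the identity $u=Bf-S'u$ upgrades this to $u\in H^{\vec{s}_{0}+\vec{M}}(\mathbb{R}^{n})$. Feeding the improved regularity back into the same identity and repeating, one obtains $u\in H^{\vec{s}_{0}+k\vec{M}}(\mathbb{R}^{n})$ for every $k\in\mathbb{N}$. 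Since $M>0$, the multi-index $\vec{s}_{0}+k\vec{M}$ eventually dominates any prescribed $\vec{t}$, and the continuous inclusion $H^{\vec{s}_{0}+k\vec{M}}(\mathbb{R}^{n})\hookrightarrow H^{\vec{t}}(\mathbb{R}^{n})$ then gives $u\in\bigcap_{\vec{t}}H^{\vec{t}}(\mathbb{R}^{n})=\mathcal{S}(\mathbb{R}^{n})$.

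There is no genuine obstacle in this argument; it is the standard elliptic/hypoelliptic bootstrap, with all the analytic content already absorbed into Proposition~\ref{prop31}. The only point deserving attention is the anisotropic nature of the scale $H^{\vec{s}}(\mathbb{R}^{n})$: one must make sure the parametrix gains regularity simultaneously in all $p$ tensor directions (i.e.\ by $\vec{M}=(M,\dots,M)$ rather than in a single component), so that the iteration genuinely exhausts $\bigcap_{\vec{t}}H^{\vec{t}}(\mathbb{R}^{n})$; this is precisely what Proposition~\ref{prop31} delivers.
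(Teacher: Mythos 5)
Your proof is correct and follows essentially the same route as the paper: apply the left parametrix to get $u=Bf-S'u$, note $Bf\in\mathcal S(\mathbb R^n)$, and use the smoothing of $S'$ to upgrade the regularity of $u$. The only (immaterial) difference is that you iterate with a fixed $M$, whereas the paper simply takes $M$ arbitrarily large in a single application of Proposition~\ref{prop31}.
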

\begin{proof}
If $u\in\mathcal S'(\mathbb R^n)$, then $u\in H^{\vec{s}}(\mathbb R^n)$ for some $\vec{s}$. Taking $B$ as in Proposition 
\ref{prop31}, we obtain
$$
BAu=(I+S')u=Bf,
$$
hence, $u=Bf-S'u.$ We have $Bf\in\mathcal S(\mathbb R^n)$ and $S'u\in H^{\vec{s}+\vec{M}}(\mathbb R^n).$ Since $M$ in Proposition \ref{prop31}
can be fixed as large as we want, we conclude $u\in\mathcal S(\mathbb R^n).$
\end{proof}
\begin{corollary}\label{cor32}
The operator $A: H^{\vec{s}}(\mathbb R^n)\rightarrow H^{\vec{s}-\vec{m}}(\mathbb R^n)$ is Fredholm, for every fixed 
$\vec{s}\in\mathbb R^n.$
\end{corollary}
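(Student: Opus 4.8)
The statement to prove is Corollary~\ref{cor32}: the operator $A: H^{\vec{s}}(\mathbb R^n)\rightarrow H^{\vec{s}-\vec{m}}(\mathbb R^n)$ is Fredholm for every fixed $\vec{s}$.

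The plan: use the parametrix $B$ from Proposition~\ref{prop31} together with the compactness of the Sobolev embeddings. We have $BA = I + S'$ and $AB = I + S''$ where $S', S''$ map $H^{\vec{s}}(\mathbb R^n)$ into $H^{\vec{s}+\vec{M}}(\mathbb R^n)$ for $M$ as large as we wish. Since the embedding $H^{\vec{s}+\vec{M}}(\mathbb R^n)\hookrightarrow H^{\vec{s}}(\mathbb R^n)$ is compact when $M>0$ (noted after (\ref{eq311})), it follows that $S'$ and $S''$, viewed as operators on the relevant spaces, are compact. Then $A$ has a left inverse modulo a compact operator and a right inverse modulo a compact operator, which by the standard characterization of Fredholm operators (Atkinson's theorem) implies $A$ is Fredholm. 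I should be slightly careful to state things on the correct spaces: $B$ is built so that $BA = I + S'$ on $H^{\vec{s}}(\mathbb R^n)$ with $S': H^{\vec{s}}(\mathbb R^n)\to H^{\vec{s}+\vec{M}}(\mathbb R^n)\hookrightarrow H^{\vec{s}}(\mathbb R^n)$ compact, and similarly $AB = I + S''$ on $H^{\vec{s}-\vec{m}}(\mathbb R^n)$ with $S''$ compact there; this is exactly the hypothesis of Atkinson's theorem for $A: H^{\vec{s}}\to H^{\vec{s}-\vec{m}}$.

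I do not anticipate a real obstacle here; the only point requiring a word is that Proposition~\ref{prop31} gives us $B$ for \emph{every} $\vec{s}$ simultaneously with the same $M$, so the composition $BA$ is unambiguously defined and the error terms land in a space that embeds compactly. The proof is then a one-line invocation of Atkinson's criterion.

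\begin{proof}
Fix $\vec{s}\in\mathbb R^p$ and take $M=1$ in Proposition~\ref{prop31}, obtaining $B: H^{\vec{t}}(\mathbb R^n)\to H^{\vec{t}+\vec{m}}(\mathbb R^n)$ for every $\vec{t}$, with $BA=I+S'$ and $AB=I+S''$, where $S',S''$ gain $\vec{M}=(1,\dots,1)$ derivatives. In particular $S': H^{\vec{s}}(\mathbb R^n)\to H^{\vec{s}+\vec{M}}(\mathbb R^n)$ and $S'': H^{\vec{s}-\vec{m}}(\mathbb R^n)\to H^{\vec{s}-\vec{m}+\vec{M}}(\mathbb R^n)$. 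Since $\vec{M}>\vec{0}$, the embeddings $H^{\vec{s}+\vec{M}}(\mathbb R^n)\hookrightarrow H^{\vec{s}}(\mathbb R^n)$ and $H^{\vec{s}-\vec{m}+\vec{M}}(\mathbb R^n)\hookrightarrow H^{\vec{s}-\vec{m}}(\mathbb R^n)$ are compact, so $S'$ and $S''$ are compact operators on $H^{\vec{s}}(\mathbb R^n)$ and on $H^{\vec{s}-\vec{m}}(\mathbb R^n)$, respectively. Thus $B: H^{\vec{s}-\vec{m}}(\mathbb R^n)\to H^{\vec{s}}(\mathbb R^n)$ is a two-sided inverse of $A: H^{\vec{s}}(\mathbb R^n)\to H^{\vec{s}-\vec{m}}(\mathbb R^n)$ modulo compact operators, and by Atkinson's theorem $A$ is Fredholm.
\end{proof}
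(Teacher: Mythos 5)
Your proof is correct and follows exactly the paper's argument: apply Proposition \ref{prop31} with $M=1$ and use the compactness of the embedding $H^{\vec{s}+\vec{1}}(\mathbb R^n)\hookrightarrow H^{\vec{s}}(\mathbb R^n)$ to conclude via Atkinson's criterion. You merely spell out the details (tracking both the left and right error terms on their respective spaces) that the paper leaves implicit.
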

\begin{proof}
Let us apply Proposition \ref{prop31}  with $M=1.$ Since the inclusion
$ H^{\vec{s}+\vec{1}}(\mathbb R^n)\rightarrow H^{\vec{s}}(\mathbb R^n)$ is compact, the  Fredholm property is proved.
\end{proof}
Let us assume now that the operator $A$
in (\ref{eq323}) is self-adjoint. It follows from the preceding arguments that  the resolvent is compact and the eigenfunctions belong to
$\mathcal S(\mathbb R^n).$ Assume further that $A$ is strictly positive; we write $0<\lambda_1\leq\lambda_2\leq...$ for its eigenvalues and $N_{A}$ for its spectral counting function.
We give below an asymptotic formula for $\lambda_k$. In the sequel we write $\asymp$ to mean that $f=O(g)$ and $g=O(f)$
are both valid.
\begin{theorem}\label{th37}
Let $A=P+R$ in $(\ref{eq323})$ be as above. We use for $P$ the notation of Theorem \ref{ellipth1}, namely we write $\alpha=\max_{j}\{2n_j/m_j\}$
and we assume that $\alpha=2n_j/m_j$ for $\nu$ indices. We then have
\begin{equation}
\label{eq325}
\lambda_k\asymp k^{1/\alpha}(\log k)^{-(\nu-1)/\alpha},\ \ \  k\rightarrow \infty
\end{equation} 
and
\begin{equation}
\label{eq3251}
N_{A}(\lambda)\asymp \lambda^{\alpha}\log^{\nu-1} \lambda,\ \ \  \lambda\rightarrow \infty.
\end{equation}
\end{theorem}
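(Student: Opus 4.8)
The plan is to prove that $R$ is an infinitesimally small perturbation of $P$ in the sense of quadratic forms, and then to transport to $A$ the spectral asymptotics of $P$ furnished by Theorem~\ref{ellipth1}, by means of the min--max principle. \emph{Step 1 (form-compactness of the perturbation).} First I would note that, since $A$ and $P$ are both self-adjoint, $R=A-P$ is symmetric. Using the mapping properties recalled in Subsection 3.1, $P^{-1/2}=P_{1}^{-1/2}\otimes\cdots\otimes P_{p}^{-1/2}$ maps $H^{\vec{s}}(\mathbb R^{n})$ continuously into $H^{\vec{s}+\vec{m}/2}(\mathbb R^{n})$, while each monomial $x_{j}^{\alpha_{j}}D_{x_{j}}^{\beta_{j}}$ with $|\alpha_{j}|+|\beta_{j}|\le m_{j}-1$ maps $H^{s}(\mathbb R^{n_{j}})$ into $H^{s-m_{j}+1}(\mathbb R^{n_{j}})$, so that $R\colon H^{\vec{s}}(\mathbb R^{n})\to H^{\vec{s}-\vec{m}+\vec{1}}(\mathbb R^{n})$. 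Composing, the operator $T:=P^{-1/2}RP^{-1/2}$ sends $L^{2}(\mathbb R^{n})=H^{\vec 0}$ into $H^{\vec{m}/2}$, then into $H^{-\vec{m}/2+\vec 1}$, then into $H^{\vec 1}$; since the inclusion $H^{\vec 1}(\mathbb R^{n})\hookrightarrow L^{2}(\mathbb R^{n})$ is compact, $T$ is a compact self-adjoint operator on $L^{2}(\mathbb R^{n})$. A bootstrap using $T\colon H^{\vec s}\to H^{\vec s+\vec 1}$ moreover shows that any eigenfunction of $T$ with nonzero eigenvalue lies in $\bigcap_{\vec s}H^{\vec s}(\mathbb R^{n})=\mathcal S(\mathbb R^{n})$.

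\emph{Step 2 (infinitesimal form bound and form comparison).} Given $\varepsilon>0$, I would split $T=F_{\varepsilon}+E_{\varepsilon}$ with $F_{\varepsilon}=\sum_{i=1}^{N}\mu_{i}(\,\cdot\,,e_{i})e_{i}$ of finite rank and $\|E_{\varepsilon}\|\le\varepsilon$. For $u$ in the form domain of $P$, put $v=P^{1/2}u$, so that $(Ru,u)=(Tv,v)$ and $(Pu,u)=\|v\|^{2}$; then $|(Ru,u)|\le\varepsilon(Pu,u)+\sum_{i}|\mu_{i}|\,|(u,P^{1/2}e_{i})|^{2}\le\varepsilon(Pu,u)+C_{\varepsilon}\|u\|^{2}$, since $P^{1/2}e_{i}\in\mathcal S(\mathbb R^{n})$. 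Hence $R$ is $P$-form bounded with relative bound $0$; by the KLMN theorem the form domains of $A$ and $P$ coincide, and taking $\varepsilon=1/2$ and $\varepsilon=1$ respectively one obtains $\tfrac12(Pu,u)-C\|u\|^{2}\le(Au,u)\le 2(Pu,u)+C\|u\|^{2}$ on this common domain.

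\emph{Step 3 (min--max comparison and inversion).} From the last inequalities the min--max principle gives $\tfrac12\lambda_{k}(P)-C\le\lambda_{k}(A)\le 2\lambda_{k}(P)+C$, whence $\lambda_{k}(A)\asymp\lambda_{k}(P)$ as $k\to\infty$ (recall $\lambda_{k}(P)\to\infty$), and consequently $N_{A}(\lambda)\asymp N_{P}(\lambda)$. Theorem~\ref{ellipth1}, applied to $P=P_{1}\otimes\cdots\otimes P_{p}$ whose factors are globally elliptic strictly positive Shubin operators with $N_{P_{j}}(\lambda)\sim A_{j}\lambda^{2n_{j}/m_{j}}$, yields $N_{P}(\lambda)\sim C_{0}\,\lambda^{\alpha}(\alpha\log\lambda)^{\nu-1}/(\nu-1)!$ with $C_{0}\neq0$; in particular $N_{P}(\lambda)\asymp\lambda^{\alpha}\log^{\nu-1}\lambda$, which together with $N_{A}\asymp N_{P}$ proves (\ref{eq3251}). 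Finally, from $k\asymp\lambda_{k}(P)^{\alpha}(\log\lambda_{k}(P))^{\nu-1}$ one deduces $\log\lambda_{k}(P)\asymp\log k$, hence $\lambda_{k}(P)^{\alpha}\asymp k(\log k)^{-(\nu-1)}$, i.e. $\lambda_{k}(P)\asymp k^{1/\alpha}(\log k)^{-(\nu-1)/\alpha}$; combined with $\lambda_{k}(A)\asymp\lambda_{k}(P)$ this gives (\ref{eq325}).

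The hard part is Step 1: since $P$ is not globally elliptic one cannot appeal to a symbolic calculus for $A$ itself, so the form-compactness of $R$ relative to $P$ must be extracted directly from the tensor-product structure of $P^{-1/2}$ and of each term of $R$ together with the compactness of the embeddings $H^{\vec s}\hookrightarrow H^{\vec t}$, in the same spirit as the ad hoc parametrix construction of Proposition~\ref{prop31}; once this is in hand, the remaining steps are the standard min--max comparison and an elementary inversion of a regularly varying counting function.
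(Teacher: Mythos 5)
Your argument is correct, and it reaches the same two estimates by a genuinely different technical route, although the overall skeleton coincides with the paper's: both proofs reduce the theorem to the two-sided eigenvalue comparison $\lambda_k(A)\asymp\lambda_k(P)$ via a variational principle and then invert the counting function of $P$ supplied by Theorem \ref{ellipth1} (the paper isolates this inversion as a separate lemma). Where you differ is in how the comparison is obtained. The paper never introduces fractional powers or quadratic-form technology: it proves the operator-norm inequalities $\|Au\|^2\le C_1\|Pu\|^2$ (from boundedness of $AP^{-1}$ on $L^2$) and $\|Pu\|^2\le C_2(\|Au\|^2+\|u\|^2)$ (writing $I=BA-S'$ with the parametrix $B$ of Proposition \ref{prop31} and taking $M$ large), and then applies max--min to $A^2$ versus $P^2$. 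You instead show that $T=P^{-1/2}RP^{-1/2}$ is compact, deduce that $R$ has relative form bound $0$ with respect to $P$, and compare the forms of $A$ and $P$ directly. Your route costs a little more input --- you need $P_j^{-1/2}$ to be a Shubin operator of order $-m_j/2$ (complex powers), plus the KLMN theorem --- but it buys strictly more: relative bound $0$ gives $(1-\varepsilon)\mu_k-C_\varepsilon\le\lambda_k(A)\le(1+\varepsilon)\mu_k+C_\varepsilon$ for every $\varepsilon>0$, hence $\lambda_k(A)\sim\lambda_k(P)$ and therefore $N_A(\lambda)\sim N_P(\lambda)$, which is exactly the refinement the paper only conjectures in the remark following the theorem. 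You may wish to state this explicitly. One small point to tidy up: the theorem hypothesizes that $A$ is self-adjoint (as an operator $H^{\vec m}\to L^2$), while KLMN produces a self-adjoint operator defined as a form sum; you should either check that these coincide (e.g.\ via essential self-adjointness on $\mathcal S(\mathbb R^n)$, using Corollary \ref{cor31}) or simply note that the form inequalities hold on $H^{\vec m}$, which is a form core for both, so min--max applies directly to the given $A$. This is a routine repair and does not affect the validity of the argument.
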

\begin{proof}
We have
$$||Au||^{2}=||AP^{-1}Pu||^{2}\leq C_1||Pu||^2
\mbox{ with } C_1=||AP^{-1}||^2_{{\mathcal L}(L^2)}.$$
 On the other hand, using Proposition \ref{prop31}, we may write $I=BA-S'$ and thus
$$||Pu||^2=||P(BA-S')u||^2\leq 2||PBAu||^2+2||PS'u||^2|\leq C_2(||Au||^2+||u||^2)
$$
with
$$C_2=2\max \{||PB||^2_{\mathcal L(L^2)}, ||PS'||^2_{\mathcal L(L^2)}\},
$$
where $||PS'||_{\mathcal L(L^2)}<\infty$ if $M$ in Proposition \ref{prop31} is chosen sufficiently large. We now rewrite the preceding estimates as
$$( A^2u,u)\leq( C_1P^2u,u) \; \mbox{ and }\; (P^2u,u) \leq(C_2(A^2+I)u,u).
$$
Using the classical max-min formula for the eigenvalues of $A^2, P^2$
and denoting here $\mu_k$ the eigenvalues of $P$, we deduce
$$\lambda^2_k\leq C_1\mu_k^2 \mbox{ and } \mu_k^2\leq C_2(\lambda_k^2+1).
$$
Hence $\lambda_k\asymp\mu_k$. As a final step in the proof, we apply the following lemma.
\begin{lemma}
\label{le33}
If the sequence $0<\mu_1\leq \mu_2\leq...$, $\mu_k\rightarrow \infty$, admits counting function
$$N(\mu)\sim r\mu^\alpha\log^s\mu, \ \ \ \mu\rightarrow \infty,
$$
with $r,\alpha>0$ and $s\geq 0,$ then
\begin{equation}
\label{eq326}
\mu_k\sim\left(\frac{\alpha}{r}\right)^{1/\alpha}k^{1/\alpha}(\log k)^{-s/\alpha}, \ \ \ k\rightarrow \infty.
\end{equation}
\end{lemma}
The proof of this lemma is a simple combination of Proposition 4.6.4, page 198, and Lemma 5.2.9, page 219, from \cite{NR} and it is therefore omitted. Since for the counting function $N_P(\mu)$
we have from Theorem \ref{ellipth1}
$$N_P(\mu)\sim r\mu^\alpha(\log \mu)^{\nu-1}
$$
for a constant $r$, we deduce from Lemma \ref{le33} for the eigenvalues $\mu_k$ of $P$
the asymptotics (\ref{eq326}) with $s=\nu-1$. Hence (\ref{eq325}) follows. The asymptotic formula (\ref{eq3251}) can be easily deduced from (\ref{eq325}), we leave details to the reader.
\end{proof}
The rough asymptotics (\ref{eq3251}) can hopefully be improved, as suggested by the result from \cite{BaGrPiRo}, which gives
$N_A(\lambda)\sim N_P(\lambda)$ in the case $p=2.$ Furthermore, we expect formula (\ref{eq315}) is invariant under lower order perturbations. On the contrary, the precise asymptotics (\ref{eq320})
for tensorized  Hermite operators should be lost, after addition of lower order terms.

\subsection{Pseudo-differential operators on closed manifolds}We now look at pseu\-do-differential operators on closed manifolds.
Let $M_{1},M_{2},\dots, M_{p}$ be closed manifolds with $\dim M_{j}=n_{j}$. We consider elliptic self-adjoint pseudo-differential operators $P_{j}$ on $M_{j}$ of order $m_{j}$ and  principal symbol $a_{m_j}(x_{j},\xi_{j})>0$ for $(x_{j},\xi_{j})\in T^{\ast}M_{j}\setminus(M_{j}\times \left\{0\right\}), j=1,...,p$. We denote by $dx_{j} d\xi_{j}$ the natural volume 
form on the cotangent bundle $T^{\ast}M_{j}$. Under these circumstances, H\"{o}rmander's theorem \cite[Chap. III]{shubin} gives us the asymptotic behavior of each counting function $N_{P_{j}}(\lambda)$ of the eigenvalues $\{\lambda^{(j)}_{k}\}_{k=1}^{\infty}$ of $P_{j}$. In fact,
\begin{equation*}
N_{P_{j}}(\lambda)=\sum_{\lambda^{(j)}_{k}\leq \lambda}1= A_{j}\lambda^{n_{j}/m_{j}} + O(\lambda ^{(n_{j}-1)/m_{j}}), \ \ \ \lambda\to\infty,
\end{equation*}
where
\begin{equation}
\label{ellipeq4}
A_{j}=\frac{1}{(2\pi)^{n_{j}}}\int_{a_{m_j}(x_{j},\xi_{j})<1}dx_{j}d\xi_{j}, \ \ \ j=1,\dots,p.
\end{equation}
As usual, $\zeta_{P_{j}}$ denotes the zeta function of the operator $P_{j}$. 

Proposition \ref{cgp2} directly gives the spectral asymptotics of the operator $P=P_{1}\otimes P_{2}\otimes \dots \otimes P_{p}$ on the closed manifold $M=M_{1}\times M_{2}\times \dots \times M_{p}\:$ of dimension $\dim M=n=n_{1}+n_{2}+\dots+n_{p}$, whenever one of the counting functions $N_{P_{l}}$ dominates all the others.

\begin{theorem} \label{ellipth2} Let $P_{j}$ be elliptic self-adjoint strictly positive pseudo-differential operator as above, $j=1,\dots,p$,. Suppose that there is $l\in\{1,2,\dots,p\}$ such that $n_{l}/m_{l}>n_{j}/m_{j}$ for all $j\neq l$. Then, the spectral counting function $N_{P}$ of the operator $P=P_{1}\otimes P_{2}\otimes \dots \otimes P_{p}$ has asymptotics
\begin{equation}
\label{ellipeq5}
N_{P}(\lambda)=\sum_{\lambda^{(1)}_{k_1}\lambda^{(2)}_{k_2}\dots\lambda^{(p)}_{k_p}\leq\lambda}1=\left(A_{l}\prod_{j\neq l}\zeta_{P_{j}}(n_{l}/m_{l})\right) \lambda^{n_{l}/m_{l}}+ O(\lambda^{\tau}), \ \ \ \lambda\to\infty,
\end{equation}
where $A_{l}$ is given by $(\ref{ellipeq4})$ and $\tau$ satisfies $\max\{(n_{l}-1)/m_{l},\max_{j\neq l}n_{j}/m_{j}\}<\tau<n_{l}/m_{l}$.
\end{theorem}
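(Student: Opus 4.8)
The plan is to derive Theorem~\ref{ellipth2} as an almost immediate consequence of Proposition~\ref{cgp2} applied to the eigenvalue sequences $\{\lambda^{(j)}_{k}\}_{k=1}^{\infty}$ of the operators $P_{j}$. First I would recall that, since each $P_{j}$ is elliptic, self-adjoint and strictly positive on the closed manifold $M_{j}$, its spectrum consists of a sequence $0<\lambda^{(j)}_{1}\leq\lambda^{(j)}_{2}\leq\cdots\to\infty$ with finite multiplicities, and that the eigenvalues of the tensor product $P=P_{1}\otimes\cdots\otimes P_{p}$ on $M=M_{1}\times\cdots\times M_{p}$ are exactly the products $\lambda^{(1)}_{k_{1}}\cdots\lambda^{(p)}_{k_{p}}$, counted with the product multiplicities; hence $N_{P}(\lambda)$ is literally the counting function $N(\lambda)$ of \eqref{cgeq2} for these $p$ sequences.

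Next I would invoke H\"{o}rmander's theorem (as stated in the lines preceding the theorem) to obtain, for each $j$, the two-term Weyl law $N_{P_{j}}(\lambda)=A_{j}\lambda^{n_{j}/m_{j}}+O(\lambda^{(n_{j}-1)/m_{j}})$ with $A_{j}$ given by \eqref{ellipeq4}. Set $\alpha=n_{l}/m_{l}$. By the hypothesis $n_{l}/m_{l}>n_{j}/m_{j}$ for all $j\neq l$, I may choose $\tau$ with $\max\{(n_{l}-1)/m_{l},\ \max_{j\neq l}n_{j}/m_{j}\}<\tau<\alpha$; this is possible precisely because both quantities in the left max are strictly below $\alpha$. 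With this $\tau$, the Weyl law for $P_{l}$ reads $N_{P_{l}}(\lambda)=A_{l}\lambda^{\alpha}+O(\lambda^{\eta})$ for some $\eta<\tau<\alpha$ (one can take $\eta=(n_{l}-1)/m_{l}$, or any value in $(\max\{(n_l-1)/m_l,\max_{j\neq l}n_j/m_j\},\tau)$), while for $j\neq l$ one has $N_{P_{j}}(\lambda)=O(\lambda^{n_{j}/m_{j}})=O(\lambda^{\tau})$. Thus the hypotheses of Proposition~\ref{cgp2} are satisfied with the dominating index $l$, the exponent $\alpha$, and the error exponents $\tau$ (for the subordinate sequences) and $\eta$ (in the expansion of $N_{P_l}$).

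Proposition~\ref{cgp2} then yields directly
\begin{equation*}
N_{P}(\lambda)=A\,\lambda^{\alpha}+O(\lambda^{\eta}),\qquad A=A_{l}\prod_{j\neq l}\left(\sum_{k=1}^{\infty}\frac{1}{\left(\lambda^{(j)}_{k}\right)^{\alpha}}\right)=A_{l}\prod_{j\neq l}\zeta_{P_{j}}(\alpha),
\end{equation*}
where I have identified the Dirichlet series $\sum_{k}(\lambda^{(j)}_{k})^{-\alpha}$ with the value $\zeta_{P_{j}}(\alpha)$ of the spectral zeta function \eqref{eq39}; this series converges since $N_{P_{j}}(\lambda)=O(\lambda^{n_{j}/m_{j}})$ with $n_{j}/m_{j}<\alpha$. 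Since $\eta<\tau$, the error term $O(\lambda^{\eta})$ is a fortiori $O(\lambda^{\tau})$, which gives \eqref{ellipeq5} with the stated range for $\tau$.

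There is essentially no obstacle here: the only points requiring a word of care are (i) verifying that the admissible interval for $\tau$ is nonempty, which is exactly the content of the hypothesis $n_{l}/m_{l}>n_{j}/m_{j}$ together with the trivial inequality $(n_{l}-1)/m_{l}<n_{l}/m_{l}$, and (ii) the harmless relabeling of the factors so that the dominating index becomes the first one, as required by the proof of Proposition~\ref{cgp2}. Everything else is a direct citation of H\"{o}rmander's asymptotics and of Proposition~\ref{cgp2}.
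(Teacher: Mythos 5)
Your proof is correct and is essentially the paper's own argument: the paper proves Theorem \ref{ellipth2} precisely by combining H\"{o}rmander's two-term Weyl law for each $N_{P_j}$ with Proposition \ref{cgp2} and identifying $\sum_k(\lambda^{(j)}_k)^{-\alpha}$ with $\zeta_{P_j}(\alpha)$. The only cosmetic remark is that your labels $\tau$ and $\eta$ are swapped relative to the hypotheses of Proposition \ref{cgp2} (which assumes $\tau<\eta<\alpha$), but since the proposition's error term is governed by the larger of the two exponents, this does not affect the conclusion.
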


For the special case of the tensor product of two elliptic operators with one counting function dominating the other one, the error term in (\ref{eq32}) improves that from \cite[Thrm. 3.2 ]{batisti} for bisingular operators.

We leave to the reader statements and proofs for the counterparts of Theorem \ref{ellipth1}, (\ref{eq315}) and Theorem \ref{th37} in the setting of closed manifolds.

\end{document}